\apptocmd{\thebibliography}{\raggedright}{}{}
\newtheorem{theorem}{Theorem}[section]
\newtheorem{lemma}[theorem]{Lemma}
\newtheorem{proposition}[theorem]{Proposition}
\newtheorem{remark}{Remark}[section]
\newtheorem{definition}{Definition}[section]
\title{On the level raising of cuspidal eigenforms modulo prime powers}
\author{Emiliano Torti \\ University of Luxembourg\\ emiliano.torti@uni.lu or torti.emiliano@gmail.com}
\begin{document}

\maketitle

\begin{abstract}
In this article we prove level raising for cuspidal eigenforms modulo prime powers (for odd primes) of weight $k\geq 2$ and arbitrary character, extending the result in weight two established by the work of Tsaknias and Wiese and generalizing (partially) Diamond-Ribet's celebrated level raising theorems.
\end{abstract}

\section{Introduction}
The problem of raising the level of newforms modulo some power of a prime $p$ is part of the study of congruences between modular forms of different levels. By congruences between modular forms we mean congruences between almost all coefficients (i.e. except from a finite number) given by the $q$-expansion principle. \\ 
The level raising phenomenon (modulo a prime $p$) was extensively studied in the past thirty years and it was definitely understood in the classical case thanks to the work of Ribet (see \cite{Ribet1990}) in weight two and trivial character, and Diamond (see \cite{Diamond1991}) for weight $k\geq 2$ and general characters.\\
In particular, they proved the following:

\begin{theorem} (Ribet, Diamond)\\
Let $f$ be a newform of weight $k\geq 2$, level $N$, character $\chi$ and let $p$ be a prime not dividing $N$. If $l$ is a prime not dividing $pN$ and $p\nmid \frac{1}{2}\varphi(N)Nl(k-2)!$,
then the following are equivalent:
$$
\begin{aligned}
&\text{(a) there exists a } l\text{-newform} \; \text{of level }lN, \text{ say } g, \text{ such that } f\equiv g \mod \mathfrak{p}\\
&\text{(b)}\; a_l ^2 \equiv \chi(l)l^{k-2}(l+1)^2 \mod\mathfrak{p}.
\end{aligned}
$$
Here, the symbol $\mathfrak{p}$ denotes a prime ideal of the coefficient field of $f$ lying above the rational prime $p$, and $a_l$ denote the $l$-th coefficient of the q-expansion of $f$. 
\end{theorem}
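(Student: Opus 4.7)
The plan is to pass through the $p$-adic Galois representation $\rho_f\colon G_{\mathbb{Q}} \to \mathrm{GL}_2(\overline{\mathbb{Q}}_p)$ attached to $f$ and its mod-$\mathfrak{p}$ reduction $\bar\rho_f$, and to turn (a) into a non-vanishing statement in the $l$-new part of a Hecke module at level $lN$.

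For the easy implication (a) $\Rightarrow$ (b) I would use the local-at-$l$ description of the representation attached to an $l$-newform. Because $l$ exactly divides $lN$ and $l\nmid$ the conductor of $\chi$, the newform $g$ is Steinberg (up to twist) at $l$: in a suitable basis
\[
\rho_g|_{G_{\mathbb{Q}_l}} \sim \begin{pmatrix} \lambda\epsilon & * \\ 0 & \lambda \end{pmatrix},
\]
with $\epsilon$ the cyclotomic character, $\lambda$ unramified, and the determinant relation $\det\rho_g = \chi\epsilon^{k-1}$ forcing $\lambda(\mathrm{Frob}_l)^2 = \chi(l)l^{k-2}$. The trace of $\mathrm{Frob}_l$ on the semisimplification of $\bar\rho_g|_{G_{\mathbb{Q}_l}}$ is then $\bar\lambda(\mathrm{Frob}_l)(l+1)$; since $f$ is unramified at $l$ (as $l\nmid N$), this same trace equals $a_l(f)\bmod \mathfrak{p}$ through the isomorphism $\bar\rho_f \cong \bar\rho_g$. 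Squaring yields (b).

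For the substantive direction (b) $\Rightarrow$ (a) I would follow the Ribet--Diamond strategy. Let $\mathbb{T}$ denote the Hecke algebra acting on $S_k(\Gamma_1(lN),\chi)$ and let $\mathfrak{m}$ be the maximal ideal generated by $\mathfrak{p}$ and the elements $T_q - a_q(f)$ for $q\nmid lN$; then producing $g$ as in (a) amounts to showing that the $l$-new localisation $\mathbb{T}^{l\text{-new}}_{\mathfrak{m}}$ is non-zero. The two degeneracy maps $S_k(N,\chi) \rightrightarrows S_k(lN,\chi)$, sending $f(z)$ to $f(z)$ and $f(lz)$ respectively, induce a level-raising map on the associated cohomology (equivalently on Jacobians in weight two) whose cokernel is the $l$-new part; condition (b) is precisely the statement that the characteristic polynomial $X^2 - a_l X + \chi(l)l^{k-1}$ of $U_l$ on the two-dimensional oldspace acquires, modulo $\mathfrak{p}$, a root $\mu$ satisfying $\mu^2 \equiv \chi(l)l^{k-2}$ (the shape of the $U_l$-eigenvalue of any $l$-newform at level $lN$), so that the old and new Hecke orbits collide modulo $\mathfrak{p}$ and force a genuine $l$-new eigenclass to exist. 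Ribet realises this in weight two via the character group of the toric part of the special fiber of $J_0(lN)$ at $l$; Diamond replaces the Jacobian by the étale cohomology of the local system $\mathrm{Sym}^{k-2}$ on the modular curve (or equivalently by suitable group-cohomology Hecke modules) for $k\geq 2$ and arbitrary character.

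The main obstacle is an Ihara-type lemma controlling the kernel of the degeneracy map on the Hecke module localised at $\mathfrak{m}$: one must show that this kernel is Eisenstein, so that the mod-$\mathfrak{p}$ eigensystem of $f$ cannot be confined to the old subspace. The arithmetic hypothesis $p\nmid \tfrac{1}{2}\varphi(N)Nl(k-2)!$ is precisely what makes this Eisenstein-control available: the factor $(k-2)!$ keeps the Eichler--Shimura comparison between cusp forms of weight $k$ and cohomology $\mathfrak{p}$-integrally clean, while $\tfrac{1}{2}\varphi(N)Nl$ excludes congruences with Eisenstein series and the usual small-prime pathologies in comparing the $\Gamma_0$- and $\Gamma_1$-structures used throughout the argument.
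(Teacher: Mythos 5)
The paper does not give a proof of this theorem: it is quoted verbatim as a background result, with references to Ribet \cite{Ribet1990} and Diamond \cite{Diamond1991}, and the body of the paper then generalizes it to cuspidal eigenforms modulo prime powers. That said, your sketch is a faithful outline of the cited proofs, and it also matches the strategy the paper deploys for its generalization in Section~2.4. The direction (a)~$\Rightarrow$~(b) you give via the Steinberg/ordinary-at-$l$ shape of $\rho_g|_{G_{\mathbb{Q}_l}}$, the determinant relation $\lambda(\mathrm{Frob}_l)^2=\chi(l)l^{k-2}$, and matching of traces on the unramified-at-$l$ reduction is exactly the analogue of the paper's Lemma~\ref{LemmaTrace}, Lemma~\ref{LemmaHida} and Proposition~\ref{necessary}. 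The direction (b)~$\Rightarrow$~(a), via the pair of degeneracy maps on (parabolic group) cohomology with $\mathrm{Sym}^{k-2}$ coefficients, the $U_l$-eigenvalue computation on the two-dimensional oldspace, Diamond's congruence-module/Ihara-type injectivity, and the role of $p\nmid \varphi(N)N l (k-2)!$ in keeping the $\chi$-eigenspace a direct summand, the self-duality of the cohomology lattice, and the Eisenstein control, is likewise the approach of Diamond that the paper reuses. The only points I would sharpen are cosmetic: passing to the semisimplification of $\bar\rho_g|_{G_{\mathbb{Q}_l}}$ is unnecessary once you use $\bar\rho_f\cong\bar\rho_g$ globally (since $\bar\rho_f$ is already unramified at $l$), and the assertion that (b) is equivalent to the $U_l$-characteristic polynomial on the oldspace having a root $\mu$ with $\mu^2\equiv\chi(l)l^{k-2}$ should be checked separately in the degenerate case $p\mid l+1$, where it still holds because $l\equiv -1 \pmod{\mathfrak{p}}$. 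Neither affects the correctness of the sketch.
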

The fundamental idea of the existence of a non-trivial congruence module introduced and developed by Ribet in a geometric context (considering Jacobians attached to modular curves of the form $X_0(N)$) was refined in cohomological terms by Diamond (see \cite{Diamond1989}). Proving the existence of a congruence module, whose non-triviality is granted by the level raising condition (i.e. the condition (b) in the above theorem), is the heart of both the proofs of Ribet and Diamond, and its cohomological construction will allow us to apply Ribet's analysis in a slightly more general context.\\
As a natural extension of studying level raising modulo $p$, one could ask if the same holds modulo $p^n$ for some positive integer $n$ and if the natural generalization of the level raising level condition is the suitable one in the prime powers setting. More specifically, given a newform of level $N$ whose $l$-th coefficient (for a prime $l$ not dividing $pN$) satisfies a certain level raising condition modulo $p^n$, one could ask if it is it true that there exists a newform (at $l$) of level $lN$ which is congruent modulo $p^n$ to the original newform $f$ of level $N$. We will refer to this question as full level raising problem.\\
It turns out that for classical newforms satisfying the Ribet-Diamond's level raising condition modulo prime powers, this is not always the case and we will present some counterexample in the last section. Nevertheless, there is a result that partially answers the question, namely Diamond proved (see Thm. 2, \cite{Diamond1991}) that if the level raising condition holds modulo some power of $p$, then this gives rise to a family of congruences modulo lower powers of $p$ between the original newform and possibly different newforms (new at $l$) of level $lN$. We will state the theorem in precise terms in the last section (see Thm. \ref{Diamond}). \\
This leads to ask if it is possible to prove the level raising modulo some prime power if we weaken the definition of a reduced cusp form modulo some prime power. Indeed, if $p$ is a prime and $\mathbb{K}$ is a finite extension of $\mathbb{Q}_p$ with ring of integers $\mathcal{O}_\mathbb{K}$, there is a natural arithmetic definition of cuspidal eigenform with coefficients in a complete Noetherian local $\mathcal{O}_\mathbb{K}$-algebra with finite residue characteristic, say $A$. 

\begin{definition}
\label{cusp}
Let $k\geq 2$ and $N\geq 5$ be positive integers. A cuspidal eigenform of weight $k$, level $N$ (coprime with $p$) and coefficients in $A$ is an element of the set $\text{Hom}_{\mathcal{O}_\mathbb{K} \text{-Alg}}(\mathbb{T}, A)$, i.e. it is an $\mathcal{O}_\mathbb{K}$-algebra homomorphism from a certain Hecke algebra $\mathbb{T}$ defined over $\mathcal{O}_\mathbb{K}$ acting on the space of classical cusp forms of weight $k$ and level $\Gamma_1(N)$ to $A$.
\end{definition}
This definition goes back to Carayol (see \cite{Carayol1994}) and it strictly depends on the fixed coefficient ring $A$. A motivation for this definition comes from the classical case (see sec.2.1 in \cite{Carayol1994}). Indeed, assuming $p\nmid N$, there exists a perfect pairing of $\mathcal{O}_\mathbb{K}$-modules between $S_k (\Gamma_1 (N),\mathcal{O}_\mathbb{K} )$ and the Hecke $\mathcal{O}_\mathbb{K}$-algebra $\mathbb{T}_N$ given by 
$\varphi: \mathbb{T}_N \times S_k (\Gamma_1 (N), \mathcal{O}_\mathbb{K} ) \rightarrow \mathcal{O}_\mathbb{K}$ 
where $\varphi(T,f)=a_1 (Tf)$ (the first coefficient in the $q$-expansion of $Tf$).\\
Now, by a cuspidal  eigenform modulo some prime power we simply mean a cuspidal eigenform defined as above with coefficients in the $\mathcal{O}_\mathbb{K}$-algebra $\mathcal{O}_\mathbb{K}/(\pi^r)$ where $\pi$ is a uniformizer and $r$ is a positive integer. A generalized definition but independent of the chosen ring of coefficients is given by Chen, Kiming and Wiese (see \cite{CKW13}). We will present a natural notion of being new for a cuspidal eigenform modulo prime powers later in the article.\\
A cusp eigenform modulo some prime power does not lift in general to an eigenform in characteristic zero once we fix the level and the weight, as it is observed by Calegari and Emerton (see \cite{CaEm04}). More explicitly, Chen, Kiming and Wiese presented a general construction for non-liftable cuspidal eigenforms modulo $p^2$ and an explicit example for $p=3$ (see sec. 5.3, \cite{CKW13}). In general, there is no known characterization which determines whether or not a cuspidal eigenforms modulo some prime power comes from the reduction of an eigenform in characteristic zero. The only known case is the one of cuspidal eigenforms modulo $p$, thanks to the Deligne-Serre lifting lemma when the character is trivial and $k\geq 2$ (see Lemma 6.11, \cite{DelSer74}). If the character $\chi$ is not trivial, thanks to a lemma of Carayol it is still possible to lift in characteristic 0 a cuspidal form modulo a prime under the extra condition $p\geq 5$ (see Prop. 1.10 \cite{Edi95})\\
Coming back to the level raising problem for cuspidal eigenforms modulo prime powers, the case of weight two and trivial character was studied by several authors (see sec. 5.6, \cite{BD05}, and sec. 1.4, \cite{BBV16}). In particular, the best known result has been proved by Tsaknias and Wiese with different techniques than the previously mentioned articles (see Thm. 5 in \cite{TsakniasWiese2017}). 
We will extend that result proving the following:

\begin{theorem}
	\label{MainTheorem}
	Let $f:\mathbb{T}_N\rightarrow \mathcal{O}_\mathbb{K}/(\pi^r)$ be a cusp eigenform of level $N$, weight $k$ and character $\chi$. Assume that its associated residual Galois representation $\bar{\rho}_{f}$ is absolutely irreducible. Suppose that $p$ does not divide $\varphi(N)N(k-2)!$ and the field $\mathbb{K}$ is sufficiently big.\\
If $k=p$ or $k=p+1$, assume that the localized Hecke algebra $\mathbb{T}_\mathcal{M}$ is Gorenstein, where $\mathcal{M}$ is the kernel of the reduction of $f$ modulo $\pi$.\\
	Let $l$ be a prime which does not divide $pN$. Then the two following statements are equivalent:
	
	$$
	\begin{aligned}
	(i)&\;\;T_l-\epsilon(l+1)R_l\in\text{Ker}(f) \text{ for some } \epsilon\in \{\pm1\}, \text{ where }R_l\in\mathbb{T}_N \text{ and }R_l ^2=l^{k-2}\chi(l)\\
	(ii)&\;\text{ there exists a cusp eigenform } g:\mathbb{T}_{N;l}\rightarrow \mathcal{O}_\mathbb{K} \text{ of weight } k \text{ and character } \chi \text{ such that: }
	\end{aligned}
	$$
	$$
	\begin{aligned}
	(a)&\;\; f(T_q)=g(T_q) \;\;\text{for all primes }\; q\nmid lpN,\\
	(b)&\;\; g\text{ is new at }l. \\
	\end{aligned}
	$$
	Here, the symbol $\mathbb{T}_{N;l}$ denotes the Hecke $\mathcal{O}_\mathbb{K}$-algebra acting on the space of classical cusp forms of weight $k$ and level  $\Gamma_1(N)\cap\Gamma_0(l)$.
\end{theorem}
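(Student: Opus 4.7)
The strategy transports Diamond's cohomological refinement of Ribet's level-raising argument \cite{Diamond1991} from the $\bmod\mathfrak{p}$ setting to the $\bmod\pi^r$ setting. The central objects are the parabolic cohomology groups $V_N=H^1_{\mathrm{par}}(\Gamma_1(N),\mathrm{Sym}^{k-2}\mathcal{O}_\mathbb{K}^2)$ and $V_{N;l}$ at level $\Gamma_1(N)\cap\Gamma_0(l)$, each carrying its natural Hecke action. The hypothesis $p\nmid(k-2)!$ keeps the integral Eichler-Shimura comparison free of denominators, and the absolute irreducibility of $\bar\rho_f$ lets us localize cleanly at the non-Eisenstein maximal ideal $\mathcal{M}=\ker(f\bmod\pi)\subset\mathbb{T}_N$ and its pullback $\mathcal{M}_l\subset\mathbb{T}_{N;l}$.

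\textbf{Direction (ii)$\Rightarrow$(i).} Suppose $g$ is classical, new at $l$, and congruent to $f$ modulo $\pi^r$ at all primes $q\nmid lpN$. The $l$-new relation in $\mathbb{T}^{l\text{-new}}_{N;l}$ forces $g(U_l)^2=\chi(l)l^{k-2}$. The pseudo-representations attached to $f$ and to $\rho_g\bmod\pi^r$ agree at $\mathrm{Frob}_q$ for all $q\nmid lpN$, hence by Chebotarev they coincide as $\mathcal{O}_\mathbb{K}/(\pi^r)$-valued pseudo-representations on $G_{\mathbb{Q},lpN}$; because the $p$-adic cyclotomic character is unramified at $l\ne p$, the Steinberg piece of $\rho_g$ at $l$ is invisible on the level of pseudo-representations. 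Computing the trace at $\mathrm{Frob}_l$ from the Steinberg model $\rho_g|_{G_l}\sim\bigl(\begin{smallmatrix}\psi&*\\0&\psi\epsilon_{\mathrm{cyc}}^{k-1}\end{smallmatrix}\bigr)$ with $\psi$ unramified of value $g(U_l)$ yields $(l+1)g(U_l)$; comparison gives $f(T_l)\equiv\epsilon(l+1)g(U_l)\pmod{\pi^r}$ for some sign $\epsilon$. A square root $R_l\in\mathbb{T}_N$ of $l^{k-2}\langle l\rangle$ with $f(R_l)=g(U_l)$ is produced by Hensel's lemma in the completion of $(\mathbb{T}_N)_\mathcal{M}$, using that $\mathbb{K}$ is large enough to contain the relevant square roots of values of $\chi$.

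\textbf{Direction (i)$\Rightarrow$(ii).} This is the substantive direction. Consider the exact sequence of localized Hecke modules
\[
0\longrightarrow V_{N,\mathcal{M}}^{\oplus 2}\xrightarrow{\ \alpha\oplus\beta\ }V_{N;l,\mathcal{M}_l}\longrightarrow V^{l\text{-new}}_{N;l,\mathcal{M}_l}\longrightarrow 0
\]
induced by the two degeneracy maps. A Wronskian computation in the spirit of Ihara and Ribet identifies the annihilator of the cokernel of $\alpha\oplus\beta$, restricted to the $l$-old part, with the ideal generated by $T_l^2-(l+1)^2l^{k-2}\langle l\rangle=-\bigl(T_l-(l+1)R_l\bigr)\bigl(T_l+(l+1)R_l\bigr)$ inside $(\mathbb{T}_N)_\mathcal{M}$. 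Condition (i) forces $f$ to send this discriminant into $(\pi^r)$, so the congruence module $V^{l\text{-new}}_{N;l,\mathcal{M}_l}\otimes_{(\mathbb{T}_N)_\mathcal{M}}(\mathcal{O}_\mathbb{K}/\pi^r)$ is nontrivial and yields an $\mathcal{O}_\mathbb{K}$-algebra map $\mathbb{T}^{l\text{-new}}_{N;l}\to\mathcal{O}_\mathbb{K}/(\pi^r)$ agreeing with $f$ on Hecke operators away from $l$. A Deligne-Serre-type lifting, safe because we remain in the residually absolutely irreducible component after enlarging $\mathbb{K}$, produces the required characteristic-zero eigenform $g:\mathbb{T}_{N;l}\to\mathcal{O}_\mathbb{K}$, new at $l$ by construction.

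\textbf{Main obstacle.} The crux is strengthening Diamond's nonvanishing of the congruence module (mod $\mathfrak{p}$) into a sharp depth-$\pi^r$ estimate: this needs $V_{N,\mathcal{M}}$ to be free of rank two over $(\mathbb{T}_N)_\mathcal{M}$, or at least $(\mathbb{T}_N)_\mathcal{M}$ to be Gorenstein so that the Hecke-module structure is self-dual. Standard multiplicity-one arguments provide this automatically except in the borderline weights $k=p$ and $k=p+1$, where we must impose it as a hypothesis. A secondary technicality is to verify that the lifted $g$ genuinely lies in the $l$-new component rather than lifting back to an $l$-old form; this is built in by working inside $\mathbb{T}_{N;l}^{l\text{-new}}$ from the start.
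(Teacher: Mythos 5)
The decisive step of your (i)$\Rightarrow$(ii) direction is the last one, and it fails. The theorem the paper actually proves (restated with the correct codomain in Section 2.3) produces a cuspidal eigenform $g:\mathbb{T}_{N;l}\to\mathcal{O}_\mathbb{K}/(\pi^r)$ which is new at $l$ in the sense that it factors through $\mathbb{T}_{N;l}^{l\text{-new}}$; the ``$\mathcal{O}_\mathbb{K}$'' in the introduction is a slip. Your final ``Deligne--Serre-type lifting'' to a characteristic-zero eigenform congruent to $f$ modulo $\pi^r$ is not available for $r\ge 2$: the Deligne--Serre and Carayol lifting lemmas only apply modulo $\pi$, eigenforms modulo $\pi^r$ need not lift at fixed level and weight (Calegari--Emerton, Chen--Kiming--Wiese), and Section 3 of the paper exhibits a counterexample to precisely the statement your lifting would prove: for the rational newform in $S_4(\Gamma_0(22))$ with $p=3$, $l=5$, the condition $(\mathrm{LRC})_\epsilon$ holds modulo $3^3$ for one of the two signs, yet there is no classical $5$-newform of level $110$ congruent to it modulo $3^3$ --- only a mod-$3^3$ eigenvalue system new at $5$ exists. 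So the proof must stop at the mod-$\pi^r$ homomorphism; with the lifting step included it asserts something false.

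There is also a gap in the core of the same direction. From (i) you immediately pass to the weaker fact that $f$ kills the discriminant $T_l^2-(l+1)^2l^{k-2}\langle l\rangle$ modulo $\pi^r$ (i.e.\ $(\mathrm{LRC})^2$), and then assert that nontriviality of the congruence module tensored with $\mathcal{O}_\mathbb{K}/(\pi^r)$ ``yields'' an $\mathcal{O}_\mathbb{K}$-algebra map $\mathbb{T}_{N;l}^{l\text{-new}}\to\mathcal{O}_\mathbb{K}/(\pi^r)$ agreeing with $f$ away from $l$. That inference is unjustified, and it cannot be correct as stated, since it would apply under $(\mathrm{LRC})^2$ alone: when $\pi\mid l+1$ the paper's comparison lemma only converts $(\mathrm{LRC})^2$ modulo $\pi^r$ into $(\mathrm{LRC})_\epsilon$ modulo $\pi^{r-v_\pi(l+1)}$, and in the same $S_4(\Gamma_0(22))$ example $(\mathrm{LRC})^2$ holds modulo $3^4$ while no $5$-new eigenvalue system modulo $3^4$ exists (Diamond only guarantees $\sum_i d_i\ge r$). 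The sign condition must be used to force $U_l$ to act as a scalar to full depth $r$: the paper does this by forming $W_f=W(\mathbb{K}/\mathcal{O}_\mathbb{K})[\ker f]$, proving via the Gorenstein hypothesis the commutative-algebra lemma $\mathrm{Ann}_{\mathbb{T}_N}(W_f)=\ker f$ (this is exactly what guarantees the constructed system has depth $r$), embedding $W_f$ by the $\epsilon$-twisted antidiagonal $x\mapsto(x,-\epsilon l^{2-k}R_l x)$, computing with the matrix of $U_l$ on the $l$-old part that $(\mathrm{LRC})_\epsilon$ makes $U_l$ act on $V_f=\alpha(\mathrm{ad}^{\epsilon}_{k,\chi}(W_f))$ by the scalar $\epsilon R_l$, and finally checking $V_f\subseteq\ker(\beta\circ\alpha)$ so the action factors through $\mathbb{T}_{N;l}^{l\text{-new}}$. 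You correctly identify the Gorenstein/freeness issue as the main obstacle, but the proposal never carries out this construction, which is the substance of the proof. (Also, in the intended statement the form $g$ in (ii)$\Rightarrow$(i) is only a mod-$\pi^r$ eigenform, so your pseudo-representation argument additionally needs Carayol's Galois representation over $\mathcal{O}_\mathbb{K}/(\pi^r)$ and the analysis of the universal $l$-new representation giving $\mathrm{Trace}\,\rho_g(\mathrm{Frob}_l)=(l+1)g(U_l)$ and $g(U_l)^2=g(S_l)$, as in Sections 2.1--2.2 of the paper.)
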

In section 2.1, we will see how to associate to each cuspidal eigenform modulo $\pi^r$ a Galois representation with coefficients in $\mathcal{O}_\mathbb{K} /(\pi^r)$. This was done by Carayol  (see Thm. 3, \cite{Carayol1994}) using deformation theory. Carayol's construction will allow us to transpose properties of classical newforms to properly defined newforms modulo prime powers and this will lead us to determine the ``correct'' level raising condition for the level raising problem in this setting. \\
In section 2.2, we will study such condition, comparing it with the one considered by Ribet and Diamond in the classical case. Finally, we will show the necessity of this condition if we assume the existence of a congruence coming from level raising. \\
In section 2.3, we state the main result of this article and in section 2.4 we give a proof. In section 3, we present a family of examples of general interest for the level raising phenomenon.
\section{Raising the level of cusp eigenforms modulo prime powers of weight $k\geq 2$}
In order to generalize Tsaknias-Wiese's level raising result (see \cite{TsakniasWiese2017}) to cuspidal eigenforms modulo prime powers of weights greater than 2, we first need to establish a necessary condition for the level raising. We will then compare it with the one considered in the classical case by Ribet (see \cite{Ribet1990}, see also \cite{Diamond1989}) and Diamond (see \cite{Diamond1991}). \\
As in the previous sections, $\mathbb{K}$ will denote a finite extension of $\mathbb{Q}_p$ for a fixed prime $p\geq 5$, $\mathcal{O}_{\mathbb{K}}$ will denote its ring of integers and $\pi$ will denote a uniformizer. We will assume that $p$ is different from 2.\\
Let $N\geq 5$ and $k\geq 2$ two positive integers. Throughout the paper, we consider them fixed. From now on, we assume that the prime $p$ does not divide $\varphi(N)$, where $\varphi$ is the Euler's totient function.\\
We fix once and for all a Dirichlet character $\chi: (\mathbb{Z}/N\mathbb{Z})^{\times}\rightarrow \mathcal{O}^{\times}_\mathbb{K}$. We will denote by $\mathbb{T}_N$ the $\mathcal{O}_\mathbb{K}$-subalgebra of $\mathrm{End}_{\mathbb{K}}(S_k(\Gamma_1(N),\chi,\mathbb{K}))$ generated by the Hecke operators $T_n$ for $n\geq 1$. If $m\in\mathbb{N}$ is coprime with $pN$, the $\mathcal{O}_\mathbb{K}$-algebra $\mathbb{T}_N$ contains the modified diamond operators $S_m:=m^{k-2} \langle m\rangle=m^{k-2}\chi(m)$; they are scalar operators.\\
As we will see in details later on, the condition that $p$ does not divide $\varphi(N)$ ensure us that $S_k(\Gamma_1(N),\chi,\mathcal{O}_\mathbb{K})$ is a direct summand in $S_k(\Gamma_1(N),\mathcal{O}_\mathbb{K})$.\\
Similarly, if $l$ is a prime which does not divide $pN$, we define the Hecke algebra $\mathbb{T}_{N;l}$ of level $lN$ as the $\mathcal{O}_\mathbb{K}$-subalgebra of $\mathrm{End}_\mathbb{K}(S_k(\Gamma_1(N)\cap \Gamma_0(l),\chi ,\mathbb{K}))$ generated by the Hecke operators. Note that $S_l\in\mathbb{T}_{N;l}$ since we are considering the congruence subgroup $\Gamma_1(N)\cap \Gamma_0(l)$.

\subsection{Galois representations attached to cuspidal eigenforms modulo prime powers}
Let $A$ be a complete Noetherian local $\mathcal{O}_{\mathbb{K}}$-algebra with finite residue field of characteristic $p$. 
Let $f$ be an element in the set $\mathrm{Hom}_{\mathcal{O}_\mathbb{K}\text{-alg}}(\mathbb{T}_N,A)$, or in other words, a cuspidal eigenform with coefficients in $A$ and character $\chi$.\\
First of all, we will start to associate to $f$ a residual Galois representation with coefficients in characteristic $p$.\\ 
Let $\bar{f}$ be the reduction of $f$ modulo the unique maximal ideal $\mathcal{M}_A$ of $A$, i.e. the composition of $f$ with the natural projection map from $A$ to the quotient $A/\mathcal{M}_A$. Now, $\bar{f}$ is a cusp eigenform whose coefficients lie in a finite extension of $\mathbb{F}_p$. \\
By a lemma of Carayol (see Prop. 1.10 \cite{Edi95}) and recalling the assumption $p\geq 5$, we obtain that $\bar{f}$ comes from the reduction of a classical (i.e. holomorphic) Hecke eigenform $g\in S_k(\Gamma_1(N),\chi)$ with coefficients in an order of a number field.\\
By a theorem of Deligne and Shimura, we can associate to $g$, and so to $\bar{f}$, a semisimple residual Galois representation 
$\rho: \mathrm{Gal}(\bar{\mathbb{Q}}/\mathbb{Q})\rightarrow \mathrm{Gl}_2(\bar{\mathbb{F}}_p)$ which is unramified outside the primes which divide $pN$ and such that $\mathrm{Trace}(\rho(\mathrm{Frob}_q))=\bar{f}(T_q)$ and $\mathrm{Det}(\rho(\mathrm{Frob}_q))=q\bar{f}(S_q)=q^{k-1}\chi(q)$ for any prime $q$ which does not divide $pN$. We will denote this residual representation associated to $\bar{f}$ by $\rho_{\bar{f}}$.\\
Assuming that $\rho_{\bar{f}}$ is absolutely irreducible, a theorem of Carayol (Thm. 3, \cite{Carayol1994}), allows us to associate to each cusp eigenform with coefficients in $A$, a Galois representations $\rho_{f}$ with coefficient in $A$ whose reduction modulo the maximal ideal $\mathcal{M}_A$ coincides with $\rho_{\bar{f}}$. In other words, the representation $\rho_{f}$ is a deformation of $\rho_{\bar{f}}$. \\
In other terms, the following result holds:

\begin{theorem} (Carayol)\\
	Let $f: \mathbb{T}_N \rightarrow A$ be a cusp eigenform of level $N$, weight $k$, character $\chi$ and coefficients in $A$ such that its residual associated Galois representation is absolutely irreducible. Then there exists a unique (up to isomorphism) continuous representation:
	$$\rho_f : \mbox{Gal}(\bar{\mathbb{Q}}/\mathbb{Q}) \longrightarrow \mbox{Gl}_2(A)$$
	which is unramified at primes not dividing $pN$, and which satisfies the relations:
	$$
	\begin{cases}
	\mbox{Trace}(\rho_f(\mbox{Frob}_q))= f(T_q) \\
	\mbox{Det}(\rho_f(\mbox{Frob}_q))= f(qS_q)=qf(S_q)=q^{k-1}\chi(q) \\
	\end{cases}
	\;\;\;\; \mbox{for all primes }q\nmid pN.
	$$
\end{theorem}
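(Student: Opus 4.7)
The plan is to realize $\rho_f$ as the specialization along $f$ of a Galois representation valued in the Hecke algebra itself, following Carayol's deformation-theoretic strategy.

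First I would reduce to the localized Hecke algebra. Let $\mathcal{M}:=\ker(\bar{f})\subset \mathbb{T}_N$ be the maximal ideal associated with the residual system of eigenvalues of $f$. Since $A$ is a complete Noetherian local $\mathcal{O}_\mathbb{K}$-algebra and $f$ sends $\mathcal{M}$ into $\mathcal{M}_A$, the map $f$ factors through the $\mathcal{M}$-adic completion $\mathbb{T}_{N,\mathcal{M}}$. Hence it suffices to produce a continuous representation $\rho: \mathrm{Gal}(\bar{\mathbb{Q}}/\mathbb{Q}) \to \mathrm{GL}_2(\mathbb{T}_{N,\mathcal{M}})$ unramified outside $pN$ with $\mathrm{Trace}(\rho(\mathrm{Frob}_q))=T_q$ and $\mathrm{Det}(\rho(\mathrm{Frob}_q))=qS_q$, and then set $\rho_f := f\circ \rho$.

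To build such a $\rho$, I would first construct a pseudo-representation with values in $\mathbb{T}_{N,\mathcal{M}}$. For every minimal prime $\mathfrak{p}\subset \mathbb{T}_{N,\mathcal{M}}$ the quotient $\mathbb{T}_{N,\mathcal{M}}/\mathfrak{p}$ embeds into a finite extension of $\mathbb{K}$ and corresponds to a classical eigenform $g_\mathfrak{p}$; Deligne--Shimura attach to $g_\mathfrak{p}$ a continuous two-dimensional Galois representation $\rho_{g_\mathfrak{p}}$ unramified outside $pN$. Assembling these produces a continuous representation valued in $\mathrm{GL}_2$ of the total ring of fractions of $\mathbb{T}_{N,\mathcal{M}}$ whose trace and determinant on each $\mathrm{Frob}_q$ (for $q\nmid pN$) are the images of $T_q$ and $qS_q$, and hence lie in the diagonal copy of $\mathbb{T}_{N,\mathcal{M}}$. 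By Chebotarev density and continuity, this yields a two-dimensional pseudo-representation $(t,d)$ of $\mathrm{Gal}(\bar{\mathbb{Q}}/\mathbb{Q})$ with values in $\mathbb{T}_{N,\mathcal{M}}$ whose reduction modulo $\mathcal{M}$ is the trace-determinant pair of $\rho_{\bar{f}}$.

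The crucial step, and the main obstacle, is to lift $(t,d)$ to a genuine representation; this is precisely where the absolute irreducibility of $\rho_{\bar{f}}$ is used. By the theorem of Nyssen--Rouquier (equivalently, Carayol's variant for complete Noetherian local coefficient rings), a two-dimensional pseudo-character over such a ring whose residual trace comes from an absolutely irreducible representation lifts uniquely up to conjugation to a genuine representation with the given trace and determinant. This delivers $\rho:\mathrm{Gal}(\bar{\mathbb{Q}}/\mathbb{Q}) \to \mathrm{GL}_2(\mathbb{T}_{N,\mathcal{M}})$ with the prescribed Frobenius data, and composition with $f$ gives $\rho_f$. For uniqueness, any two continuous lifts of $\rho_{\bar{f}}$ to $\mathrm{GL}_2(A)$ satisfying the stated identities have identical trace on the dense set $\{\mathrm{Frob}_q : q\nmid pN\}$; by Chebotarev density and continuity they agree on the whole Galois group, and absolute irreducibility of the reduction then forces the two representations to be conjugate. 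Without absolute irreducibility, the trace alone would not determine a representation, so Carayol's hypothesis on $\rho_{\bar{f}}$ cannot be weakened.
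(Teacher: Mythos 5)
Your proposal is correct and follows the same overall strategy the paper takes from Carayol: factor $f$ through the completed Hecke algebra at $\mathcal{M}=\ker(\bar f)$, use the Deligne--Shimura representations of the classical eigenforms lifting the residual eigensystem, use absolute irreducibility of $\rho_{\bar f}$ to get a representation with coefficients in the Hecke algebra itself, and finally specialize along $f$; uniqueness via Chebotarev plus absolute irreducibility is likewise how the paper handles it (Carayol's Theorem 1). The one genuine difference is the key lemma used to land inside the Hecke algebra: the paper (following Carayol) builds an honest representation into $\mathrm{Gl}_2(\hat{\mathbb{T}}'_{\mathcal{M}})\cong\prod_i\mathrm{Gl}_2(\mathcal{O}_{\mathbb{E}_i})$, where $\mathbb{T}'$ is the subalgebra generated by the $T_q$ with $q\nmid pN$, and then descends its image to $\mathrm{Gl}_2(\mathbb{T}'_{\mathcal{M}})$ via Carayol's Th\'eor\`eme 2 together with Chebotarev and the fact that the Frobenius traces lie in $\mathbb{T}'_{\mathcal{M}}$; you instead package trace and determinant as a pseudo-representation and invoke Nyssen--Rouquier lifting, which is a mathematically cognate (indeed more general) tool requiring $p$ odd, as is assumed here. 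Your route buys you independence from the normalization $\hat{\mathbb{T}}'_{\mathcal{M}}$, while the paper's route is the one actually available in Carayol's original argument. One small precision you should add: the full algebra $\mathbb{T}_{N,\mathcal{M}}$ need not be reduced (the operators $U_q$ for $q\mid N$ can fail to be semisimple), so it need not inject into the product of its quotients by minimal primes; the correct statement is that your pseudo-character takes values in the closed subalgebra $\mathbb{T}'_{\mathcal{M}}$ generated by the $T_q$, $q\nmid pN$ (which is reduced because those operators are simultaneously diagonalizable), and this suffices since the asserted trace and determinant identities only involve $T_q$ and $S_q$ for $q\nmid pN$, exactly as in the paper where $\rho_f:=\tilde f_\star\circ\rho^{\text{univ}}$ with $\rho^{\text{univ}}$ valued in $\mathrm{Gl}_2(\mathbb{T}'_{\mathcal{M}})$.
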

Hence, let $r\geq 1$ be an integer and let $f$ be a cusp eigenform on $\Gamma_1(N)$ of character $\chi$, weight $k\in\mathbb{N}_{\geq2}$, level $N\in\mathbb{N}$ and coefficients in $A:=\mathcal{O}_{\mathbb{K}}/{(\pi^r)}$.\\
Using Carayol's theorem, we associated to $f$ a Galois representation
$$\rho_f : \mbox{Gal}(\bar{\mathbb{Q}}/\mathbb{Q}) \longrightarrow \mbox{Gl}_2(\mathcal{O}_{\mathbb{K}}/{(\pi^r)})$$
such that:
$$
\begin{cases}
\mbox{Trace}(\rho_f(\mbox{Frob}_q))= f(T_q) \\
\mbox{Det}(\rho_f(\mbox{Frob}_q))= f(qS_q)=qf(S_q)=q^{k-1}\chi(q)\\
\end{cases}
\;\;\;\; \mbox{for all primes }q\nmid pN
$$
and which is unramified outside the set of primes dividing $pN$.\\
Now, since we are interested in congruences between cusp eigenforms of level $N$ and level $lN$ modulo some power of $p$ , we need to understand what happens to these representations at the (possibly bad) prime l.\\
The strategy will be the following: first, we will briefly recall the construction of the universal representation in Carayol's proof of the above theorem, then we will consider a cusp eigenform modulo prime powers $g$ of level $lN$ (i.e. with respect to the congruence subgroup $\Gamma_1(N)\cap\Gamma_0(l)$ where $l$ does not divide $N$) which is new at $l$, and we will use the Carayol's universal representation construction to deduce some properties on the Galois representation $\rho_g$ associated to $g$.\\
As long as it will be possible, we will work in the general context where coefficients of the cusp eigenforms lie in a complete Noetherian local $\mathcal{O}_{\mathbb{K}}$-algebra $A$ with finite residue field of characteristic $p$.\\
Let $f$ be a cusp eigenform defined as in the above theorem, then we define $\bar{f}$ the reduction of $f$ modulo the maximal ideal $\mathcal{M}_A$ of $A$. We denote by $\mathcal{M}$ the kernel of $\bar{f}: \mathbb{T}_N\rightarrow A/\mathcal{M}_A \hookrightarrow \bar{\mathbb{F}}_p$; it is a maximal ideal.\\
Since $A$ is complete, by the universal property of completion, we have that $f$ factors through the completion of the Hecke algebra at $\mathcal{M}$, i.e. we have the following commutative diagram:
$$
\begin{tikzcd}
	& \mathbb{T}_\mathcal{M}  \arrow{d}{\tilde{f}} \\
	\mathbb{T} \arrow{r}[swap]{f} \arrow{ur}{\lambda^{\text{univ}}} & A,    
\end{tikzcd}
$$
where $\lambda^{\text{univ}}$ is the natural ring homomorphism from $\mathbb{T}:=\mathbb{T}_N$ to its completion at $\mathcal{M}$. Note that, when it will be possible (i.e. clear from the context), we will drop the subscript denoting the level considered in order to simplify the notation.\\
This implies that if we are able to construct a Galois representation of the so-called (e.g. \cite{Carayol1994}) universal Hecke eigenform $\lambda^{\text{univ}}$, then we can associate to $f$ the Galois representation given by the composition between the universal one and the homomorphism $\mathrm{Gl}_2(\mathbb{T}_\mathcal{M})\rightarrow \mathrm{Gl}_2(A)$ induced by $\tilde{f}$.\\
Following Carayol, we proceed as follows: first we consider the $\mathcal{O}_{\mathbb{K}}$-subalgebra of $\mathbb{T}_N$ generated by all the Hecke operators $T_n$ with gcd$(n,pN)=1$ and we denote it by $\mathbb{T}'\subset \mathbb{T}$. Note that in this context, the diamond operators are just scalars. We denote by $\hat{\mathbb{T}}'_\mathcal{M}$ the integral closure of $\mathbb{T}'_\mathcal{M}$ in $\mathbb{T}'_\mathcal{M} \otimes \mathbb{K}$, where $\mathbb{T}'_\mathcal{M}$ is the image of $\mathbb{T}'$ in $\mathbb{T}_\mathcal{M}$ (we drop the subscript that denotes the level $N$ of the Hecke algebra). Since the Hecke operators $T_n$ with gcd$(n,N)=1$ are simultaneously diagonalizable, the ring $\hat{\mathbb{T}}'_\mathcal{M}$ is isomorphic to the finite product of some rings of integers of finite extensions of $\mathbb{K}$ which we denote by $\mathbb{E}_j$ where the index $j$ runs over a finite subset $J$ of the positive integers. The finite set $J$ is in bijection with the set of classical normalized eigenforms, up to Gal$(\bar{\mathbb{K}}/\mathbb{K})$-conjugacy, whose residual Galois representation is isomorphic to $\bar{\rho}_f$. Hence, we have an injection:

$$\mathbb{T}'_\mathcal{M} \hookrightarrow \hat{\mathbb{T}}'_\mathcal{M} \cong \prod_{j\in J} \mathcal{O}_{\mathbb{E}_j}.$$
Now, fix an $i\in J$, we can consider the composition with the standard projection on the $i$-th component $\mbox{Pr}_i$ and so we get the ring homomorphism $g_i$:

$$g_i: \mathbb{T}'\rightarrow \mathbb{T}'_\mathcal{M} \hookrightarrow \hat{\mathbb{T}}'_\mathcal{M} \cong \prod_{j\in J} \mathcal{O}_{\mathbb{E}_j} \xrightarrow{Pr_i} \mathcal{O}_{\mathbb{E}_i}.$$
The above homomorphism $g_i$ can be extended to a homomorphism $g_i: \mathbb{T} \rightarrow \tilde{\mathbb{E}}_i$ where $\tilde{\mathbb{E}}_i$ is a finite extension of $\mathbb{E}_i$. In order to do so, we need to define $g_i$ on the operators $T_q$ where $q$ is a prime dividing the level $N$. Since all these operators satisfy their characteristic polynomials, it is enough to define $\tilde{\mathbb{E}}_i$ as the smallest field in which these characteristic polynomials split completely. Moreover since we have a finite number of these polynomials, we get a finite degree extension of $\mathbb{E}_i$. For simplicity, we keep denoting these extensions by $\mathbb{E}_i$.\\
Now, the ring homomorphism $g_i$ is a classical normalized eigenform with coefficients in a finite extension of $\mathbb{Q}_p$ whose residual Galois representation is isomorphic to $\bar{\rho}_f$. By a theorem of Deligne and Shimura, we can associate to each of the $g_i$ a Galois representation $\rho_{g_i}: \mbox{Gal}(\bar{\mathbb{Q}}/\mathbb{Q}) \rightarrow \mbox{Gl}_2(\mathbb{E}_i)$ such that the traces of the images of Frobenius elements at primes different from $p$ and not dividing the level are the coefficients of the $q$-expansion associated to the cusp form $g_i$. Finally, it is enough to define the representation of the universal modular form as the product of the $\rho_{g_i}$ for all the finite indexes $i$, so we get:

$$\rho^{\text{univ}}:=\prod_{i\in J} \rho_{g_i}: \mbox{Gal}(\bar{\mathbb{Q}}/\mathbb{Q})\rightarrow  \mbox{Gl}_2(\hat{\mathbb{T}}'_\mathcal{M})\cong \mbox{Gl}_2\Big(\prod_{i\in J} \mathcal{O}_{\mathbb{E}_i}\Big)\cong \prod_{i\in J} \mbox{Gl}_2(\mathcal{O}_{\mathbb{E}_i}).
$$
Now, a priori we know that the image of the universal representation is contained in $\mbox{Gl}_2(\hat{\mathbb{T}}'_\mathcal{M})$, but it actually lies inside $\mbox{Gl}_2(\mathbb{T}'_\mathcal{M})$. In order to show this, it is sufficient to observe that it follows from a theorem of Carayol (see Thm.2 in \cite{Carayol1994}; or sec. 6 in \cite{Maz97}), Chebotarev's density theorem and from the fact that the traces at Frobenius elements lie in $\mathbb{T}'_\mathcal{M}$. The existence of the representation $\rho^{\text{univ}}$ allows us to define $\rho_f$ as the Galois representation attached to $f$ via the following commutative diagram:
$$
\begin{tikzcd}
& \;\;\;\;\;\;\;\;\;\;\;\;\;\;\;\;\;\;\;\;\;\;\;\mbox{Gl}_2(\mathbb{T}'_\mathcal{M})\hookrightarrow \prod_{i\in J} \mbox{Gl}_2(\mathcal{O}_{\mathbb{E}_i})  \arrow{d}{\tilde{f}_{\star}} \\
\mbox{Gal}(\bar{\mathbb{Q}}/\mathbb{Q}) \arrow{r}[swap]{\rho_{f}:=\tilde{f}_{\star}\circ\rho^{\text{univ}}} \arrow{ur}{\rho^{\text{univ}}} & \mbox{Gl}_2(A)    
\end{tikzcd}
$$
where the homomorphism $\tilde{f}_{\star}$ is the one induced functorially by $\tilde{f}: \mathbb{T}_\mathcal{M}\rightarrow A$ on the general linear groups.\\
Now, we want to introduce a notion of newform for cuspidal eigenforms with coefficients in a complete Noetherian local $\mathcal{O}_\mathbb{K}$-algebra $A$, and deduce useful properties of its associated Galois representations. This will allow us to find a necessary congruence condition for the level raising problem in the next section.\\
Let $l$ be a prime not dividing $pN$ and let $\mathbb{T}_{N,l}^{l\text{-new}}$ be the $\mathcal{O}_\mathbb{K}$-algebra generated by all Hecke operators inside the endomorphism ring of the $\mathbb{K}$-vector space $S_k(\Gamma_1(N)\cap\Gamma_0(l),\chi,\mathbb{K})^{l\text{-new}}$. It is a natural quotient of the Hecke algebra $\mathbb{T}_{N,l}$. Note that when it will be possible (i.e. clear from the context) we will drop the subscript that denotes the level in the symbol $\mathbb{T}_{N,l}^{l\text{-new}}$.
We give the following:
\begin{definition}
Let $g: \mathbb{T}_{N;l}\rightarrow A$ be a cuspidal eigenform of level $lN$ weight $k$, character $\chi$. The cuspidal eigenform $g$ is $l$-new if it factors (as $\mathcal{O}_\mathbb{K}$-algebra homomorphism)
via the quotient $\mathbb{T}_{N,l}^{l\text{-new}}$.
\end{definition}
A careful analysis of Carayol's construction of the universal Galois representation will allow us to transpose properties of classical $l$-newforms to cuspidal eigenforms which are $l$-new in the above sense.\\
Fix a cuspidal eigenform $g:\mathbb{T}_{N,l}\rightarrow A$ of weight $k$, character $\chi$ which is new at $l$. Let $\mathcal{N}$ be the kernel of the reduction of $g$ modulo the maximal ideal of $A$. Localizing $\mathbb{T}_{N,l}$ at the maximal ideal $\mathcal{N}$ and denoting by $\mathcal{N}^{l\text{-new}}$ the image of the maximal ideal $\mathcal{N}$ via the natural projection $\mathbb{T}_{N,l}\twoheadrightarrow \mathbb{T}_{N;l}^{l\text{-new}}$, we have the following commutative diagram (note that $\mathcal{N}^{l\text{-new}}$ is still a maximal ideal):
$$
\begin{tikzcd}
& \mathbb{T}^{l\text{-new}}_{\mathcal{N}^{l\text{-new}}}  \arrow{d}{\tilde{g}} \\
\mathbb{T}^{l\text{-new}} \arrow{r}[swap]{g} \arrow{ur}{\lambda^{\text{univ}}_{l\text{-new}}} & A    
	\end{tikzcd}
	$$
where $\lambda^{\text{univ}}_{l\text{-new}}$ is the natural homomorphism from $\mathbb{T}^{l\text{-new}}$ to its localization at $\mathcal{N}^{l\text{-new}}$.\\
Hence we can consider the ``universal" Galois representation given by the product of all the Galois representations associated to the classical Hecke eigenforms which are new at $l$:
$$\rho^{\text{univ}}_{l\text{-new}}: \mbox{Gal}(\bar{\mathbb{Q}}/\mathbb{Q})\longrightarrow \mbox{Gl}_2(\mathbb{T}^{l\text{-new}}_{\mathcal{N}^{l\text{-new}}})\hookrightarrow \mbox{Gl}_2 \Big(\prod_{i\in \tilde{J}} \mathcal{O}_{\mathbb{E}_i}\Big) \cong\prod_{i\in \tilde{J}} \mbox{Gl}_2(\mathcal{O}_{\mathbb{E}_i})$$
where $\tilde{J}$ is a finite set in bijection with the classical $l$-newforms of level $N$ whose residual Galois representation is isomorphic to the residual Galois representation of $g$.\\
Now, recalling that we are always assuming that $l$ does not divide $pN$, the representation $\rho^{\text{univ}}_{l\text{-new}}$ has an explicit description at the prime $l$:

\begin{lemma}
\label{LemmaTrace}
Consider the universal Galois representation $$\rho^{\text{univ}}_{l\text{-new}}: \mathrm{Gal}(\bar{\mathbb{Q}}/\mathbb{Q})\rightarrow \mathrm{Gl}_2 (\mathbb{T}^{l\text{-new}}_{\mathcal{N}^{l\text{-new}}})$$ associated to the newforms at $l$ constructed above. Then the trace map is well defined at $\text{Frob}_l$ and it satisfies:
$$\mbox{Trace}(\rho^{univ}_{l\text{-new}})(\mbox{Frob}_l)=(l+1)U_l$$
where $U_l\in\mathbb{T}^{l\text{-new}}_{\mathcal{N}^{l\text{-new}}}.$
Equivalently, since by definition the finite set $\tilde{J}$ defined above is in bijection with the (finite) set of classical Hecke newforms at $l$ of level $lN$ (denoted by $h_i$ for $i$ in $\tilde{J}$), we have that:
$$\mbox{Trace}(\rho^{\text{univ}}_{l\text{-new}})(\text{Frob}_l)=((l+1)h_i(U_l))_{i\in \tilde{J}}\in\prod_{i\in \tilde{J}} \mathcal{O}_{\mathbb{E}_i}.$$
\end{lemma}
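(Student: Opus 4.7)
My plan is to reduce the claim to a pointwise statement about each classical newform indexed by $\tilde{J}$, prove it there via local-global compatibility at the bad prime $l$, and reassemble everything using the product embedding
$$
\mathbb{T}^{l\text{-new}}_{\mathcal{N}^{l\text{-new}}} \hookrightarrow \prod_{i\in \tilde{J}}\mathcal{O}_{\mathbb{E}_i}
$$
built into the Carayol-type construction just recalled.

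Since by construction $\rho^{\text{univ}}_{l\text{-new}}=\prod_{i\in \tilde{J}}\rho_{h_i}$, it suffices to prove, for each classical $l$-newform $h_i$ of level $lN$ whose residual representation is isomorphic to $\bar{\rho}_g$, that $\mathrm{Trace}(\rho_{h_i}(\mathrm{Frob}_l))$ is independent of the chosen lift of $\mathrm{Frob}_l$ and equals $(l+1)h_i(U_l)$. Granted this, the tuple $((l+1)h_i(U_l))_{i\in \tilde{J}}$ is precisely the image of $(l+1)U_l$ under the embedding above, yielding both the fact that the trace lies in $\mathbb{T}^{l\text{-new}}_{\mathcal{N}^{l\text{-new}}}$ and the desired explicit formula.

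The core of the argument is local-global compatibility at $l$, via Carayol's Thm.~2 in \cite{Carayol1994} and the local Langlands classification for $\mathrm{Gl}_2$. Since $h_i$ is $l$-new with $l\nmid N$ and $\chi$ unramified at $l$, its local automorphic component at $l$ is an unramified twist of the Steinberg representation, and correspondingly, writing $D_l$ for a decomposition group at $l$, one obtains
$$
\rho_{h_i}|_{D_l} \cong \begin{pmatrix}\mu_i\epsilon_l & *\\ 0 & \mu_i\end{pmatrix},
$$
where $\epsilon_l$ is the $l$-adic cyclotomic character (so $\epsilon_l(\mathrm{Frob}_l)=l$) and $\mu_i$ is the unramified character sending $\mathrm{Frob}_l$ to $h_i(U_l)$. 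In this basis the inertia $I_l$ acts through unipotent matrices of the form $\bigl(\begin{smallmatrix}1&*\\ 0&1\end{smallmatrix}\bigr)$, so modifying the lift of $\mathrm{Frob}_l$ by an element of $I_l$ multiplies $\rho_{h_i}(\mathrm{Frob}_l)$ on the right by a unipotent upper triangular matrix and therefore leaves the trace unchanged. A direct computation then gives $\mathrm{Trace}(\rho_{h_i}(\mathrm{Frob}_l))=\mu_i\epsilon_l(\mathrm{Frob}_l)+\mu_i(\mathrm{Frob}_l)=(l+1)h_i(U_l)$, consistently with $\det(\rho_{h_i}(\mathrm{Frob}_l))=l^{k-1}\chi(l)$ and the Atkin--Lehner relation $h_i(U_l)^2=l^{k-2}\chi(l)$.

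The main obstacle, as I see it, is not the combinatorics of the final trace computation but the invocation of local-global compatibility in exactly this shape: one needs the explicit description of $\rho_{h_i}|_{D_l}$ as an unramified twist of Steinberg, together with the identification of the Frobenius eigenvalue of $\mu_i$ with the Hecke eigenvalue $h_i(U_l)$, and one must verify carefully that the non-trivial monodromy makes $I_l$ act through unipotent matrices so that the trace of a Frobenius lift is unambiguous despite the ramification of $\rho_{h_i}$ at $l$.
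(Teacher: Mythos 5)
Your proposal is correct and follows essentially the same route as the paper: both rest on the explicit local shape of $\rho_{h_i}|_{D_l}$ for an $l$-newform $h_i$, namely an upper triangular extension with unramified diagonal characters $\omega_p\eta$ and $\eta$ with $\eta(\mathrm{Frob}_l)=h_i(U_l)$, so inertia acts unipotently, the trace of a Frobenius lift is unambiguous, and it equals $(l+1)h_i(U_l)$; the paper gets this from ordinarity at $l$ via \cite{EmPoWe06}, whereas you invoke local-global compatibility and the Steinberg classification, but the local description used is identical. One terminological slip worth fixing: what you call the ``$l$-adic cyclotomic character'' must be the \emph{$p$-adic} cyclotomic character $\omega_p$ (the coefficients are $p$-adic), which is unramified at $l\neq p$ with $\omega_p(\mathrm{Frob}_l)=l$ --- the value you correctly use --- while the genuine $l$-adic cyclotomic character restricted to $G_{\mathbb{Q}_l}$ is wildly ramified and cannot be evaluated at a Frobenius lift.
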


\begin{proof}

Let $h$ be a classical Hecke newform at $l$ of level $lN$, and consider the Galois representation $\rho_h$ attached to $h$ by Deligne and Shimura. Since $h$ is new at $l$, it is well known (e.g sec. 3.3, \cite{Wes05}) that $\rho_h$ is ordinary at the prime $l$. We should recall that being ordinary is a deformation condition (sec. 30, \cite{Maz97}). Moreover, we have a very explicit description of $\rho_h$ when restricted to the decomposition group $G_{\mathbb{Q}_l}$ (see Lemma 2.6.1, \cite{EmPoWe06}). 
Indeed, let $V_{I_l}$ be the largest quotient of the Galois module $V$ given by the representation $\rho_h$, on which the inertia group $I_l$ acts trivially. Then there exists an unramified character $\eta: G_{\mathbb{Q}_l}\rightarrow \bar{\mathbb{Q}}_p$ such that:

$$
\begin{aligned}
&(1)\;\eta(\mathrm{Frob}_l)=a_l(h)=h(U_l) \\
&(2)\; G_{\mathbb{Q}_l} \mbox{ acts on } V_{I_l} \mbox{ via the character } \eta \\
&(3)\; {\rho_h}_{\big|_{G_{\mathbb{Q}_l}}}=
\begin{pmatrix}
\omega_p \eta & \star \\ 0 & \eta
\end{pmatrix}
\end{aligned}
$$
where $\omega_p$ denotes the $p$-adic cyclotomic character. Here, the symbol $a_l(h)$ denotes the $l$-coefficient of the $q$-expasion attached to $h$. Now, since $l\neq p$, we deduce that the trace is well defined at the Frobenius at $l$ and then:
$$\mbox{Trace}(\rho_h)(\mbox{Frob}_l)=(\omega_p (\mbox{Frob}_l)+1)\eta(\mbox{Frob}_l)=(l+1)h(U_l).$$
Since the representation $\rho_{l\text{-new}}^{\text{univ}}$ is defined as the product $\prod_{i\in \tilde{J}} \rho_{h_i}$, it follows that the semi-simplification of its restriction to the decomposition group $G_{\mathbb{Q}_l}$ is unramified, and so the claim follows. 
\end{proof}

\begin{remark}\normalfont
The key role in the above proof is played by the local property (at $l$) of the Galois representation attached to a newform (at $l$) which consists of being ordinary at $l$. We refer the reader to section 5 in \cite{Rib94} for a very clear explanation on the reason why the property holds.
\end{remark}
Now, from the above lemma we deduce that if $g: \mathbb{T}_{N;l} \rightarrow A$ is a cusp eigenform of level $lN$, new at $l$ with weight $k$, character $\chi$ and coefficients in $A$, then, as already mentioned above, its Galois representation $\rho_g$ factors through the universal representation $\rho^{\text{univ}}_{l\text{-new}}$. Hence, denoting by $\tilde{g}$ the map $\mathbb{T}^{l\text{-new}}_{\mathcal{N}^{l\text{-new}}}\rightarrow A$ induced by $g$, and denoting by $\tilde{g}_{\star}$ the map induced on the general linear groups by $\tilde{g}$, we can apply the trace map and get:
$$\mathrm{Trace}(\rho_g)(\mathrm{Frob}_l)=\mathrm{Trace}(\tilde{g}_\star(\rho^{\text{univ}}_{l\text{-new}}))(\mathrm{Frob}_l)=\tilde{g}((l+1)U_l)=(l+1)g(U_l).$$
Note that even though $\rho_g$ is a priori not unramified at $l$, the image of the restriction at the $l$-decomposition group is contained in a Borel subgroup and the two characters on the main diagonal are unramified at $l$, hence its semisimplification is unramified as well and so the trace and the determinant are well defined at Frob$_l$.\\
A key fact in proving the necessity of the level raising condition lies in the explicit description of the $l$-th coefficient of a classical newform of level $lN$ , and now we will prove how this knowledge can be extended in the more general case of cuspidal eigenforms modulo prime powers. More specifically, Hida proved (see Lemma 3.2, \cite{Hida1985}) that if $g$ is a classical newform (at $l$) of level $lN$ (as usual, $l\nmid N$), weight $k\geq 2$, and character $\chi$ of conductor which divides $N$, then $a_l(g)^2=l^{k-2}\chi(l)$, where as usual $a_l(g)$ denotes the $l$-coefficient of the $q$-expansion of $g$. In other words, every  $\mathcal{O}_\mathbb{K}$-algebra homomorphism $g:\mathbb{T}_{N;l}\rightarrow\mathcal{O}_\mathbb{K}$ which is new at $l$ satisfies $g(U_l^2-S_l)=0$. We observe that we can apply Hida's lemma because we are working with the congruence subgroup $\Gamma_1(N)\cap\Gamma_0(l)$ and so the conductor of $\chi$ is coprime to $l$.\\
 This result can be easily generalized to cuspidal eigenforms with coefficients in $A$ of level $lN$ which are new at $l$ by observing that, thanks to the result of Hida mentioned before, the operator $U_l^2-S_l$ is in the kernel of the natural projection $\mathbb{T}_{N;l}\twoheadrightarrow \mathbb{T}_{N;l}^{l\text{-new}} $.\\
 Hence, we have the following lemma:
 
\begin{lemma}
\label{LemmaHida}
 	Let $g: \mathbb{T}_{N;l}\rightarrow A$ a cuspidal eigenform of weight $k\geq2$, level $lN$, character $\chi$ and coefficients in $A$, which is new at $l$. Then $g$ satisfies $g(U_l^2-S_l)=0$.
\end{lemma}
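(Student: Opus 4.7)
The plan is to reduce to the classical characteristic-zero case by exploiting the factorization of $l$-new eigenforms through the quotient algebra $\mathbb{T}_{N;l}^{l\text{-new}}$. Concretely, the target identity $g(U_l^2-S_l)=0$ will follow formally once one shows that the element $U_l^2-S_l$ already vanishes in the quotient, i.e.\ lies in the kernel of the natural surjection $\pi:\mathbb{T}_{N;l}\twoheadrightarrow \mathbb{T}_{N;l}^{l\text{-new}}$. For then, by the very definition of being $l$-new, $g$ factors as $g=\tilde{g}\circ\pi$ for some $\mathcal{O}_\mathbb{K}$-algebra homomorphism $\tilde{g}:\mathbb{T}_{N;l}^{l\text{-new}}\to A$, and applying $\tilde{g}$ to $\pi(U_l^2-S_l)=0$ produces the claim.

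To prove the vanishing of $U_l^2-S_l$ in $\mathbb{T}_{N;l}^{l\text{-new}}$, I would use that this algebra acts faithfully on the $\mathbb{K}$-vector space $S_k(\Gamma_1(N)\cap\Gamma_0(l),\chi,\mathbb{K})^{l\text{-new}}$ essentially by its definition. After possibly enlarging $\mathbb{K}$, Atkin--Lehner theory provides a basis of this space consisting of classical normalized newforms $h$ at $l$ of level $lN$, weight $k$ and character $\chi$. On each such $h$, the operator $U_l$ acts by the scalar $a_l(h)$ while $S_l$ acts by $l^{k-2}\chi(l)$, the latter because the congruence subgroup $\Gamma_1(N)\cap\Gamma_0(l)$ keeps the conductor of $\chi$ coprime to $l$. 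Hida's Lemma 3.2 in \cite{Hida1985} then asserts that $a_l(h)^2=l^{k-2}\chi(l)$ for each such classical newform, so $U_l^2-S_l$ annihilates every basis vector and hence the whole new subspace. By faithfulness of the action, $U_l^2-S_l=0$ in $\mathbb{T}_{N;l}^{l\text{-new}}$, which is exactly the input required by the first paragraph.

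The substantive mathematics is entirely absorbed into the classical Hida identity; the remaining argument is purely formal, amounting to translating the concrete vanishing of a Hecke eigenvalue on classical newforms into an algebraic statement in the abstract Hecke algebra quotient and then pushing it along the factorization defining $l$-newness. The only small point one should verify is that Hida's hypothesis on the conductor of $\chi$ being coprime to $l$ is genuinely in force here, but this is automatic since $\chi$ is a Dirichlet character modulo $N$ and $l\nmid N$ by assumption. Hence I expect no genuine obstacle beyond correctly invoking the two inputs (Atkin--Lehner faithfulness and Hida's lemma) in the right order.
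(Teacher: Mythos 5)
Your proposal is correct and follows essentially the same route as the paper: Hida's Lemma 3.2 forces $U_l^2-S_l$ to lie in the kernel of the projection $\mathbb{T}_{N;l}\twoheadrightarrow\mathbb{T}_{N;l}^{l\text{-new}}$, and the factorization of $g$ through that quotient, which is the definition of being $l$-new, gives $g(U_l^2-S_l)=0$. You merely spell out in more detail (via the faithful action of the new quotient on the $l$-new subspace and its eigenbasis coming from newforms at $l$) the step the paper states in one line, so there is nothing substantive to add.
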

The proof is immediate from the definition of $g$ being new at $l$ since the operator $U_l ^2 -S_l$ belongs to the kernel of the natural projection $\mathbb{T}_{N,l}\twoheadrightarrow \mathbb{T}_{N,l}^{l\text{-new}}$. It is also worth to mention that this result can be expressed in terms of properties of the Galois representation attached to a cuspidal eigenform new at $l$ with coefficients in the $\mathcal{O}_\mathbb{K}$-algebra $A$. Indeed, it is enough to observe that by construction:

$$\mathrm{Det}(\rho_{l\text{-new}}^{\mathrm{univ}})(\mathrm{Frob}_l)=l U_l^2=l S_l$$
where the last equality holds because of Hida's result. We recall that the determinant is well defined at Frob$_l$ since the semi-simplification of the restriction of $\rho_{l\text{-new}}^{\text{univ}}$ to the decomposition group $G_{\mathbb{Q}_l}$ is unramified at $l$. Hence, Lemma \ref{LemmaTrace} and Lemma \ref{LemmaHida} give the explicit formula for the characteristic polynomial at $\text{Frob}_l$: 
$$\text{charpol}((\rho_{l\text{-new}}^{\text{univ}})(\text{Frob}_l))(x)=x^2-(l+1)U_l x+l S_l.$$
We recall that, for the sake of simplicity, we are making an abuse of notation in using the same symbol $U_l$ for the $l$-th Hecke operator in the Hecke algebra $\mathbb{T}$ of level $lN$, and for the images of $U_l$ in the completion $\mathbb{T}_\mathcal{N}$ and $\mathbb{T}^{l\text{-new}}_{\mathcal{N}^{l\text{-new}}}$.\\
Finally, we ready to use the theory of Galois representations to compare the coefficients of cuspidal eigenforms modulo prime powers in a context of level raising.
\subsection{The level raising condition}
In this section we will find a necessary condition for two cusp eigenforms modulo $\pi^r$ (one of level $N$ and one of level $lN$, new at $l$ where $l$ does not divide $pN$) to be equal (modulo $\pi^r$) almost everywhere. We will do it by comparing the associated Galois representations constructed in the previous subsection.\\
Let $A:=\mathcal{O}_\mathbb{K}/(\pi^r)$ for some positive integer $r$.
Let $f$ be a cusp eigenform of level $N$, weight $k$, character $\chi$ and coefficients in $A$. As usual let $l$ be a prime not dividing $pN$. We will always assume that the semisimple residual Galois representation $\rho_{\bar{f}}$ attached to $f$ is absolutely irreducible. Then, by Carayol's theorem (see Thm. 3, \cite{Carayol1994}) mentioned above, we have that $\rho_{f}$ is unramified at $l$ and it satisfies:
$$\mbox{Trace}(\rho_f(\mbox{Frob}_l))=f(T_l).$$
As in the previous section, let $g$ be a cusp eigenform of level $lN$, new at $l$, same weight and character of $f$ with coefficients in $A$. By the discussion in the previous section, we know that the Galois representation $\rho_g$ attached to $g$ through Carayol's theorem satisfies the well-defined property:
$$\mathrm{Trace}(\rho_g)(\mathrm{Frob}_l)=(l+1)g(U_l).$$
Hence, if we assume that $f$ is congruent to $g$, i.e. $f(T_n)=g(T_n)$ for all positive integers $n$ coprime with $lpN$, we have that in terms of Galois representations:
$$\mbox{Trace}(\rho_f(\mbox{Frob}_q))=\mbox{Trace}(\rho_g(\mbox{Frob}_q))$$
for all primes $q$ not dividing $plN$. \\
Hence, by the Chebotarev's density theorem, the representations $\rho_{f}$ and $\rho_g$ agree on a subset of density one of the set of generators of the absolute Galois group, hence they agree at all the traces, in particular at $l$. Thanks to a theorem of Carayol (see Thm.1, \cite{Carayol1994}), the two representations $\rho_f$ and $\rho_g$ are isomorphic. In particular,
$$\mbox{Trace}(\rho_f(\mbox{Frob}_l))=\mbox{Trace}(\rho_g(\mbox{Frob}_l)),$$
where $\mbox{Trace}(\rho_g(\mbox{Frob}_l))$ is well defined since the characters on the diagonal of the restriction of $\rho_g$ at the decompisition group at $l$ are unramified.
More explicitly, we have
$$f(T_l)=(l+1)g(U_l) \text{   in }\mathcal{O}_\mathbb{K}/(\pi^r),$$
where $T_l\in\mathbb{T}_N$ and $U_l\in\mathbb{T}_{N;l}$. Moreover, we can make the above condition independent of $g$ and get the necessary so-called level raising condition. In order to do so, we first need to introduce a special operator in the Hecke algebra $\mathbb{T}_{N;l}$. Assume that the field $\mathbb{K}$ is sufficiently big in the sense that it contains the roots of the polynomial $x^2-\chi(l)$ and, if the weight $k$ is odd, it contains a square root of $l$, then the following lemma holds:

\begin{lemma}
There exists an operator $R_l$ in the $\mathcal{O}_\mathbb{K}$-algebra $\mathbb{T}_{N;l}$ such that $R_l^2=S_l$. 
\end{lemma}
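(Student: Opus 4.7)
The proof is essentially a direct construction: since $S_l = l^{k-2}\chi(l)$ acts as the scalar $l^{k-2}\chi(l) \in \mathcal{O}_\mathbb{K}^\times$ on the cuspidal space (it is stated earlier in the paper that the modified diamond operators $S_m$ for $m$ coprime to $pN$ are scalar operators), it suffices to exhibit a scalar square root in $\mathcal{O}_\mathbb{K}$, which we can then view inside $\mathbb{T}_{N;l}$ via the structural embedding $\mathcal{O}_\mathbb{K} \hookrightarrow \mathbb{T}_{N;l}$.

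I would split on the parity of the weight. If $k$ is even, write $k = 2m$, so that $l^{k-2} = (l^{m-1})^2$, and set $R_l := l^{m-1}\alpha$, where $\alpha \in \mathcal{O}_\mathbb{K}$ is a root of $x^2 - \chi(l)$. Such an $\alpha$ exists in $\mathbb{K}$ by hypothesis, and it lies in $\mathcal{O}_\mathbb{K}$ because $\chi(l) \in \mathcal{O}_\mathbb{K}^\times$ forces $\alpha$ to be integral and $\mathcal{O}_\mathbb{K}$ is integrally closed. Then $R_l^2 = l^{k-2}\chi(l) = S_l$. If $k$ is odd, write $k = 2m+1$, so that $l^{k-2} = l \cdot (l^{m-1})^2$, and set $R_l := l^{m-1}\beta\alpha$, where $\alpha$ is as above and $\beta \in \mathcal{O}_\mathbb{K}$ is a square root of $l$ (which exists in $\mathcal{O}_\mathbb{K}$ by the extra odd-weight hypothesis together with integrality). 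Then again $R_l^2 = l^{k-2}\chi(l) = S_l$.

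There is essentially no obstacle here; the lemma is really just the assertion that the hypotheses on $\mathbb{K}$ have been set up precisely so that a square root of the scalar $S_l$ exists in $\mathcal{O}_\mathbb{K}$. The only point worth being explicit about is the integrality of $\alpha$ and $\beta$: one invokes that $\mathcal{O}_\mathbb{K}$ is the integral closure of $\mathbb{Z}_p$ in $\mathbb{K}$, so any root in $\mathbb{K}$ of a monic polynomial with coefficients in $\mathcal{O}_\mathbb{K}$ automatically lies in $\mathcal{O}_\mathbb{K}$. With $R_l$ constructed as a scalar in $\mathcal{O}_\mathbb{K}$, its image under $\mathcal{O}_\mathbb{K} \hookrightarrow \mathbb{T}_{N;l}$ provides the desired operator, concluding the proof.
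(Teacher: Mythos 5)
Your proof is correct and rests on the same observations as the paper's: $S_l$ acts as the scalar $l^{k-2}\chi(l)$, the ``sufficiently big'' hypothesis supplies square roots of $\chi(l)$ (and of $l$ when $k$ is odd) which are integral hence lie in $\mathcal{O}_\mathbb{K}$, and scalars from $\mathcal{O}_\mathbb{K}$ lie in the Hecke algebra. The paper packages the same scalar $l^{\frac{k-2}{2}}\zeta$ as $(l^{\frac{2-k}{2}}\zeta^{-1})^{m}S_l^{\frac{m+1}{2}}$ for an odd $m$, but this is only a cosmetic difference from your parity-split construction.
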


\begin{proof}
Indeed, fix an odd positive integer $m$ and fix once and for all a root $\zeta \in \mathbb{K}$ of the polynomial $x^2-\chi(l)$.\\
We define the operator $R_l:=(l^{\frac{2-k}{2}} \zeta^{-1})^{m} S_l^{\frac{m+1}{2}}$ (see also sec. 4, \cite{Diamond1989} for a similar definition in weight $k=2$). A straightforward computation shows that $R_l^2=S_l$. Note that if $g:\mathbb{T}_{N;l}\rightarrow A$ is a cusp eigenform then $g(R_l)=l^{\frac{k-2}{2}} \zeta$, and so the operator $R_l$ does not depend on the choice of $m$. We need to prove that $R_l$ lies in the $\mathcal{O}_\mathbb{K}$-algebra $\mathbb{T}_{N;l}$.\\
Since $m$ is odd and $S_l \in \mathbb{T}_{N;l}$ we deduce that $S_l^{\frac{m+1}{2}}\in\mathbb{T}_{N;l}$. Since $\chi(l)$ is a root of unity and $\zeta\in\mathbb{K}$ we deduce that $\zeta\in\mathcal{O}_{\mathbb{K}}^{\times}$, so same holds for $\zeta^{-m}$. Since $l^{\frac{k-2}{2}}\in \mathcal{O}_{\mathbb{K}}^{\times}$ we conclude that $R_l\in\mathbb{T}_{N;l}$. \\
Indeed, since $g(U_l)^2=g(S_l)$, the assumption $p\neq 2$ and Hensel's lemma ensure that there exists an $\epsilon_{l,g}\in\{\pm 1\}$ such that $g(U_l)=\epsilon_{l,g} g(R_l)=\epsilon_{l,g} l^{\frac{k-2}{2}} \zeta$. We recall that $\zeta\in\mathcal{O}^{\times}_\mathbb{K}$ satisfies $\zeta^2=\chi(l)$ and it exists because, by assumption, $\mathbb{K}$ contains the splitting field of $p_{l,\chi}(x)=x^2-\chi(l)=0$. 
\end{proof}
Since $g$ has the same weight $k$, and same character $\chi$ of $f$, we finally conclude that:
$$f(T_l)=\epsilon_{l,g}(l+1)f(R_l)=\epsilon_{l,g}(l+1)l^{\frac{k-2}{2}}\zeta \;\;\;\;\;\;\;\text{ in }\mathcal{O}_\mathbb{K}/(\pi^r).$$
Denoting the kernel of $f: \mathbb{T}_N \rightarrow \mathcal{O}_{\mathbb{K}}/{(\pi^r)}$ by $I$, we then restate the above necessary property for the level raising as: for $\epsilon \in\{\pm 1\}$
$$ T_l-\epsilon(l+1)R_l \in I\;\;\;\;\;\;\;\;\;\;\;\;\;\;\;\; (\mathrm{LRC})_\epsilon$$
We will refer to the above condition by ``\textit{level raising condition of parameter} $\epsilon$", or shortly (LRC)$_\epsilon$.\\
Then we have proven the following necessary condition for the level raising in the context of newforms modulo prime powers:
 
 \begin{proposition} \label{necessary}
 	Let $f:\mathbb{T}_N\rightarrow \mathcal{O}_{\mathbb{K}}/(\pi^r)$ be a cusp eigenform of weight $k\geq 2$, level $N\in\mathbb{Z}_{>0}$, character $\chi$ where $\mathbb{K}$ is a sufficiently big, finite extension of $\mathbb{Q}_p$. Assume that $\rho_{\bar{f}}$ is absolutely irreducible. Let $l$ be a prime such that $l\nmid pN$. Suppose there exists a cusp eigenform $g:\mathbb{T}_{N;l}\rightarrow \mathcal{O}_{\mathbb{K}}/(\pi^r)$ of level $lN$, same weight and character as $f$, which is new at $l$ and it satisfies: 
 	$$f(T_n)=g(T_n) \mbox{ for all } n\in\mathbb{N} \mbox{ such that } \mbox{gcd}(n, plN)=1$$
 	then $f$ satisfies $\mathrm{(LRC)}_\epsilon$ for some $\epsilon\in\{\pm1\}$.
 \end{proposition}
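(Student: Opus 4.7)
The proof proposal follows the line of reasoning already outlined in the section leading up to the proposition, so the plan is essentially to assemble the pieces rigorously.

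First, I would attach the two Galois representations $\rho_f$ and $\rho_g$ using Carayol's theorem (Theorem on p.~4) applied to the residually absolutely irreducible setting; note that by hypothesis $f$ and $g$ have the same residual system of Hecke eigenvalues at primes $q \nmid plN$, and so (since $\bar\rho_{\bar f}$ is determined by these eigenvalues via Chebotarev) one has $\rho_{\bar f} \cong \rho_{\bar g}$, ensuring that the representation $\rho_g$ is well-defined with coefficients in $A = \mathcal{O}_\mathbb{K}/(\pi^r)$. From the general construction $\rho_f$ is unramified at $l$ with $\mathrm{Trace}(\rho_f(\mathrm{Frob}_l)) = f(T_l)$. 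From the analysis of the $l$-new universal representation (Lemma~\ref{LemmaTrace}) and the factorization of $\rho_g$ through $\rho^{\mathrm{univ}}_{l\text{-new}}$, the semisimplification of $\rho_g$ restricted to $G_{\mathbb{Q}_l}$ is unramified with $\mathrm{Trace}(\rho_g(\mathrm{Frob}_l)) = (l+1)g(U_l)$.

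Next I would invoke Carayol's uniqueness theorem (Thm.~1 of \cite{Carayol1994}): since $\mathrm{Trace}(\rho_f(\mathrm{Frob}_q)) = f(T_q) = g(T_q) = \mathrm{Trace}(\rho_g(\mathrm{Frob}_q))$ for every prime $q \nmid plN$, and since such Frobenius elements are dense in $\mathrm{Gal}(\bar{\mathbb{Q}}/\mathbb{Q})$ by Chebotarev, the two deformations of $\bar{\rho}_{\bar f}$ have equal traces everywhere they are defined, hence are isomorphic. Applying the isomorphism to $\mathrm{Frob}_l$ (permissible because the semisimplification of $\rho_g|_{G_{\mathbb{Q}_l}}$ is unramified, so the trace is well-defined there) yields the identity
$$f(T_l) = (l+1)g(U_l) \quad \text{in } \mathcal{O}_\mathbb{K}/(\pi^r).$$

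The final step turns this identity into the condition $(\mathrm{LRC})_\epsilon$. By Lemma~\ref{LemmaHida}, $g(U_l)^2 = g(S_l)$; since $g$ has the same character as $f$, $g(S_l) = l^{k-2}\chi(l) = f(S_l) = f(R_l)^2$. Because $p \neq 2$ and $\mathcal{O}_\mathbb{K}/(\pi^r)$ is a local ring in which $f(R_l)$ is a unit (as $\zeta \in \mathcal{O}_\mathbb{K}^\times$ and $l \in \mathcal{O}_\mathbb{K}^\times$), Hensel's lemma forces $g(U_l) = \epsilon\, f(R_l)$ for a uniquely determined sign $\epsilon \in \{\pm 1\}$. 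Substituting gives $f(T_l) = \epsilon(l+1)f(R_l)$, i.e.\ $T_l - \epsilon(l+1)R_l \in \ker(f)$, which is exactly $(\mathrm{LRC})_\epsilon$.

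I do not expect a serious obstacle: the only subtle point is to justify that $\rho_g$ is genuinely a deformation of the same residual representation $\bar\rho_{\bar f}$ (so that Carayol's uniqueness applies in the category of deformations to $A$), and to verify that $f(R_l)$ is a unit in $\mathcal{O}_\mathbb{K}/(\pi^r)$ so that Hensel's lemma produces a true sign rather than merely a square-root up to higher-order ambiguity. Both follow from the running hypotheses ($p \nmid N$, $l \nmid pN$, $\mathbb{K}$ sufficiently big, $p \neq 2$), so the argument goes through cleanly.
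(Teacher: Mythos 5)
Your proof is correct and follows essentially the same route as the paper: construct the Carayol deformations $\rho_f$ and $\rho_g$, show they are isomorphic via Chebotarev plus Carayol's uniqueness theorem, read off $f(T_l) = (l+1)g(U_l)$ from the trace at $\mathrm{Frob}_l$, and then use Lemma~\ref{LemmaHida} together with Hensel's lemma (and the fact that $f(R_l)$ is a unit) to pin down the sign $\epsilon$. The two "subtle points" you flag — that $\rho_g$ deforms the same residual representation, and the unit-ness of $f(R_l)$ — are handled in the paper implicitly, and your explicit treatment of them is a small improvement rather than a departure.
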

 
\begin{remark}
\normalfont
\label{suffbig}
With $\mathbb{K}$ sufficiently big we mean a finite extension of $\mathbb{Q}_p$ which contains a square root of $\chi(l)$ and, if $k$ is odd, it contains a square root of $l$. 
\end{remark}
 
 We want to remark that in the literature (e.g. \cite{Ribet1990}, \cite{Diamond1991}, \cite{Wes05}) the level raising condition is usually expressed as follows:
 $$T^2_l-(l+1)^2 S_l\in I \;\;\;\;\;\;\;\;\;\;\;\; \mathrm{(LRC)}^2.$$
 We will refer to the above condition as $\mathrm{(LRC)}^2$. Clearly, if $f$ is a cusp eigenform modulo $\pi^r$ which satisfies $\mathrm{(LRC)}_\epsilon$, then it satisfies also $\mathrm{(LRC)}^2$. The converse does not hold in general. Before stating the result connecting the two level raising conditions we need some definitions. First, we will denote by $v_\pi$ the $\pi$-adic valuation on $\mathcal{O}_\mathbb{K}$. Now, let $f:\mathbb{T}_N \rightarrow \mathcal{O}_\mathbb{K}/(\pi^r)$ be a cusp eigenform of weight $k$ and character $\chi$. Let $l$ be a prime which does not divide $pN$ such that $f$ satisfies the level raising condition at $l$ given by $\mathrm{(LRC)}^2$. Let $s\leq r-1$ be a positive integer. As usual we denote by $I$ the kernel of $f$. We define $I_s$ as the kernel of the composition $f_s :\mathbb{T}_N\rightarrow \mathcal{O}_\mathbb{K}/(\pi^r)\twoheadrightarrow \mathcal{O}_\mathbb{K}/(\pi^s)$ where the first arrow is given by $f$ and the second one is given by the natural projection. We say that $f$ satisfies $s$-$\mathrm{(LRC)}_\epsilon$ if there exists an $\epsilon\in\{\pm1\}$ such that the reduction of $f$ mod $\pi^s$, i.e. $f_s$, satisfies 
 $f_s(T_l)-\epsilon(l+1) f_s(R_l) \in I_s$. For instance, the conditions $r$-$\mathrm{(LRC)}_\epsilon$ and $\mathrm{(LRC)}_\epsilon$ are the same. The condition $0$-$\mathrm{(LRC)}_\epsilon$ is the empty condition. Now we are ready to present the following result which states a more precise connection between the level raising conditions. We recall that we are assuming that $p$ is an odd prime. 

\begin{lemma}
	\label{confronto}
	Let $f:\mathbb{T}_N \rightarrow \mathcal{O}_\mathbb{K}/(\pi^r)$ a cusp eigenform of weight $k$ and character $\chi$. Let $l$ be a prime which does not divide $pN$ such that $f$ satisfies the level raising condition at $l$ given by $\mathrm{(LRC)}^2$. Let $v$ be the value of the $\pi$-adic valuation at $l+1$, i.e. $v=v_\pi(l+1)$.\\
	 Then there exists an $\epsilon\in\{\pm1\}$ such that $f$ satisfies the following level raising conditions $(r-v)\text{-}\mathrm{(LRC)}_\epsilon$.
	 In particular, if the prime $l$ satisfies $v_\pi(l+1)= 0$, then the conditions $\mathrm{(LRC)}^2$ and $\mathrm{(LRC)}_\epsilon$ for some $\epsilon \in\{\pm 1\}$ are equivalent.

\end{lemma}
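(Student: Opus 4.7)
The plan is to exploit the factorisation $T_l^2-(l+1)^2 S_l=\bigl(T_l-(l+1)R_l\bigr)\bigl(T_l+(l+1)R_l\bigr)$, available in $\mathbb{T}_N$ because $R_l^2=S_l$ (here $R_l=l^{(k-2)/2}\zeta$ is the scalar constructed in the preceding lemma). Applying the hypothesis $\mathrm{(LRC)}^2$ via $f$ yields
$$\bigl(f(T_l)-(l+1)f(R_l)\bigr)\bigl(f(T_l)+(l+1)f(R_l)\bigr)=0\quad\text{in }\mathcal{O}_{\mathbb{K}}/(\pi^r),$$
which is the only input to the rest of the argument.

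First I would fix lifts $\alpha,\beta\in\mathcal{O}_{\mathbb{K}}$ of the two factors, chosen so that $\alpha+\beta=2(l+1)f(R_l)$ exactly in $\mathcal{O}_{\mathbb{K}}$ (lift $f(T_l)$ and $f(R_l)$ to $\mathcal{O}_{\mathbb{K}}$ first, then define $\alpha,\beta$ by the obvious formulas). The vanishing of the product mod $\pi^r$ gives $v_\pi(\alpha)+v_\pi(\beta)\geq r$. Next I would compute $v_\pi(\alpha+\beta)$: since $p$ is odd, $v_\pi(2)=0$; and since $p\nmid l$ together with $\zeta\in\mathcal{O}_{\mathbb{K}}^{\times}$ (because $\zeta^2=\chi(l)$ is a root of unity) the scalar $f(R_l)=l^{(k-2)/2}\zeta$ is a unit. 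Hence $v_\pi(\alpha+\beta)=v_\pi(l+1)=v$.

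The finish is pure non-archimedean arithmetic: the ultrametric inequality forces $\min(v_\pi(\alpha),v_\pi(\beta))\leq v$, and combining this with $v_\pi(\alpha)+v_\pi(\beta)\geq r$ yields $\max(v_\pi(\alpha),v_\pi(\beta))\geq r-v$. Whichever of $\alpha,\beta$ attains the maximum selects a sign $\epsilon\in\{\pm 1\}$ with $f_{r-v}(T_l)-\epsilon(l+1)f_{r-v}(R_l)\in I_{r-v}$, which is precisely $(r-v)\text{-}\mathrm{(LRC)}_\epsilon$. In the special case $v=0$ we recover the full $\mathrm{(LRC)}_\epsilon$ mod $\pi^r$, and the reverse implication $\mathrm{(LRC)}_\epsilon\Rightarrow\mathrm{(LRC)}^2$ is immediate from the same factorisation.

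There is no genuinely deep obstacle here; the only subtlety is the trivial edge case $r\leq v$, where $r-v\leq 0$ and the target condition is vacuous by the paper's convention that $0\text{-}\mathrm{(LRC)}_\epsilon$ is the empty condition. The proof is morally a Hensel-style splitting of a product of two elements whose sum has explicitly controlled valuation, and the assumptions $p\neq 2$ and $p\nmid l$ enter precisely to guarantee that this controlled valuation equals $v_\pi(l+1)$.
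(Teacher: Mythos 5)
Your proof is correct and follows essentially the same route as the paper: factor $T_l^2-(l+1)^2S_l$ as $(T_l-(l+1)R_l)(T_l+(l+1)R_l)$ using $R_l^2=S_l$, note that the product of valuations is at least $r$ while the sum $2(l+1)f(R_l)$ has valuation exactly $v$ (here $p\neq 2$ and $f(R_l)$ a unit are used), and conclude $\max \geq r-v$ by the ultrametric inequality. The only difference is that you are slightly more careful than the paper in lifting the two factors to $\mathcal{O}_{\mathbb{K}}$ before taking valuations and in writing the product relation as an inequality $v_\pi(\alpha)+v_\pi(\beta)\geq r$ (the paper's ``$b=r-a$'' is a mild abuse); this is a cosmetic improvement, not a different method.
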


\begin{proof}
	The result comes from straightforward computations modulo prime powers. Consider the identity $f(T_l )^2-(l+1)^2 f(S_l )=(f(T_l )-(l+1)f(R_l ))(f(T_l )+(l+1)f(R_l ))$ and define 
	$a:=v_\pi(f(T_l )-(l+1)f(R_l ))$ and $b:=v_\pi (f(T_l )+(l+1)f(R_l ))$. We can assume, without loss of generality, that $a\leq b$. Since $v_\pi(2 f(R_l ))=0$, we have that 
	$$v=v_\pi(l+1)=v_\pi (f(T_l )-(l+1)f(R_l )-f(T_l )-(l+1)f(R_l ))\geq \text{min}\{a,b\}=a, $$
	and so we deduce that $b=r-a \geq r-v$. In other words, the cuspidal eigenform $f$ satisfies $(r-v)$-(LRC$)_{-}$.

\end{proof}
\subsection{The level raising congruence}
As in the previous section, let $N\geq 5$, $r$ and $k\geq 2$ be positive integers. Fix an odd prime $p$ not dividing $N$. Let $\mathbb{K}$ be a sufficiently big finite extension of $\mathbb{Q}_p$, with ring of integers $\mathcal{O}_\mathbb{K}$ and a uniformizer $\pi$.\\
We will prove the following:

\begin{theorem}
	\label{MainTheorem}
	Let $f:\mathbb{T}_N\rightarrow \mathcal{O}_\mathbb{K}/(\pi^r)$ be a cusp eigenform of level $N$, weight $k$ and character $\chi$. Assume that its associated residual Galois representation $\bar{\rho}_{f}$ is absolutely irreducible. Suppose that $p$ does not divide $\varphi(N)N(k-2)!$.\\
If $k=p$ or $k=p+1$, assume that the localized Hecke algebra $\mathbb{T}_\mathcal{M}$ is Gorenstein, where $\mathcal{M}$ is the kernel of the reduction of $f$ modulo $\pi$.\\
	Let $l$ be a prime which does not divide $pN$. Then the two following statements are equivalent:
	
	$$
	\begin{aligned}
	(i)&\;f \text{ satisfies the level raising condition } \mathrm{(LRC)}_\epsilon \text{ at the prime } l \text{ for some } \epsilon\in \{\pm1\}\\
	(ii)&\;\text{ there exists a cusp eigenform } g: \mathbb{T}_{N;l} \rightarrow \mathcal{O}_{\mathbb{K}}/{(\pi^r)} \text{ of level } lN, \text{ weight } k \text{ and character } \chi \text{ which satisfies: }
	\end{aligned}
	$$
	$$
	\begin{aligned}
	(ii.1)&\;\; f(T_q)=g(T_q) \;\;\text{for all primes }\; q\nmid lpN,\\
	(ii.2)&\;\; g\text{ is new at }l\text{, i.e. it factors through the quotient }\; \mathbb{T}_{N;l}^{l\text{-new}}. \\
	\end{aligned}
	$$
\end{theorem}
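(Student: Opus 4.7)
The direction (ii) $\Rightarrow$ (i) is already established by Proposition \ref{necessary}, so the plan is to prove the harder direction (i) $\Rightarrow$ (ii). I would follow the Ribet--Diamond template of producing a non-trivial congruence module, adapted to coefficients in $\mathcal{O}_\mathbb{K}/(\pi^r)$ in the spirit of Tsaknias--Wiese's weight $2$ argument. Concretely, the goal is to construct a surjective $\mathcal{O}_\mathbb{K}$-algebra map $\tilde g : \mathbb{T}^{l\text{-new}}_{\mathcal{N}^{l\text{-new}}} \to \mathcal{O}_\mathbb{K}/(\pi^r)$, where $\mathcal{N}$ is the maximal ideal of $\mathbb{T}_{N;l}$ lying over $\mathcal{M}$ via the $l$-raising embedding selected by the sign $\epsilon$; then $g := \tilde g \circ \lambda^{\text{univ}}_{l\text{-new}}$ factors through $\mathbb{T}^{l\text{-new}}$ by construction, giving (ii.2), while (ii.1) will follow from the Chebotarev/trace comparison via Carayol's theorem.

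The technical heart is the construction of this congruence homomorphism. I would work with parabolic group cohomology: set $L_k := \mathrm{Sym}^{k-2}(\mathcal{O}_\mathbb{K}^2) \otimes \chi$ and define $M_N := H^1_P(\Gamma_1(N), L_k)_\mathcal{M}$ and $M_{N;l} := H^1_P(\Gamma_1(N) \cap \Gamma_0(l), L_k)_\mathcal{N}$. The two standard degeneracy maps assemble into a Hecke-equivariant embedding $M_N \oplus M_N \hookrightarrow M_{N;l}$ whose image is the $l$-old subspace; the $l$-new cohomology $M^{l\text{-new}}_{N;l}$ is cut out as the quotient, and the hypotheses $p \nmid \varphi(N)N(k-2)!$ ensure the Eichler--Shimura comparison is integrally well-behaved and that the relevant character-isotypic pieces are direct summands. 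An Ihara-type lemma modulo $\pi^r$ then identifies the kernel of $\mathbb{T}_\mathcal{M} \to \mathbb{T}^{l\text{-new}}_{\mathcal{N}^{l\text{-new}}}$ (composed along the raising embedding) with the ideal generated by $T_l - \epsilon(l+1)R_l$. The condition $(\mathrm{LRC})_\epsilon$ is precisely what guarantees that $f$ descends to this quotient, producing $\tilde g$; Lemma \ref{LemmaHida} combined with a Hensel-lemma extraction (legitimate because $p \neq 2$) of $U_l = \pm R_l$ on the resulting eigenform verifies the two conclusions of (ii).

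The main obstacle is to upgrade the weight-two freeness statement: one needs $M_N$ to be free of rank one over $\mathbb{T}_\mathcal{M}$ in order to transport the ring-theoretic congruence through cohomology to the Hecke algebra, and this is not automatic modulo $\pi^r$. In the generic weight range, the coprimality $p \nmid \varphi(N)N(k-2)!$ combined with Poincar\'e duality on $H^1_P$ forces this freeness. In the exceptional cases $k = p$ and $k = p+1$, however, the integral structure of $\mathrm{Sym}^{k-2}$ interacts non-trivially with the $p$-torsion of cohomology and Eichler--Shimura fails to be strict integrally; this is exactly where I invoke the Gorenstein hypothesis on $\mathbb{T}_\mathcal{M}$, which by the standard duality argument forces $M_N$ to be $\mathbb{T}_\mathcal{M}$-free of rank one even in these edge cases. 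Once freeness is secured, the mod $\pi^r$ congruence-module argument runs parallel to Ribet--Diamond, and the level-raised eigenform $g$ drops out.
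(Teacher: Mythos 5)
Your proposal is in the right neighborhood: it identifies the Ribet--Diamond template, uses parabolic cohomology and degeneracy maps, invokes an Ihara-type injectivity, and correctly flags that Gorenstein must intervene at weights $k=p, p+1$. However, the key mechanism you describe does not match what actually works, and one step is a genuine gap.

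The central problem is the claim that ``an Ihara-type lemma modulo $\pi^r$ identifies the kernel of $\mathbb{T}_\mathcal{M} \to \mathbb{T}^{l\text{-new}}_{\mathcal{N}^{l\text{-new}}}$ with the ideal generated by $T_l - \epsilon(l+1)R_l$.'' There is no natural ring map $\mathbb{T}_\mathcal{M} \to \mathbb{T}^{l\text{-new}}_{\mathcal{N}^{l\text{-new}}}$: the $l$-new Hecke algebra carries $U_l$, not $T_l$, and choosing ``the raising embedding selected by the sign $\epsilon$'' requires first knowing that $U_l$ acts as $\epsilon R_l$ on the relevant piece, which is exactly what you are trying to prove. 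Even granting such a map, Ihara's lemma (Diamond's Lemma 3.2, which the paper uses as Lemma \ref{Diamond}) gives only the injectivity of the cohomology map $\alpha : W(\mathbb{K}/\mathcal{O}_\mathbb{K})^{\oplus 2} \to V(\mathbb{K}/\mathcal{O}_\mathbb{K})$; it yields a non-trivial congruence module $\Omega \cong \mathrm{Ker}(\beta\circ\alpha)$, not a presentation of the new Hecke ring quotient, and certainly not the strong principal-generation statement you assert. Establishing what you claim would amount to a structural $R=\mathbb{T}$-type result that the paper does not prove and does not need. What the paper actually does is dual and weaker: it forms $W_f := W(\mathbb{K}/\mathcal{O}_\mathbb{K})[\mathrm{Ker}(f)]$, embeds it into $W(\mathbb{K}/\mathcal{O}_\mathbb{K})^{\oplus 2}$ via the twisted antidiagonal $x \mapsto (x, -\epsilon l^{2-k}R_l x)$, pushes it forward by $\alpha$ to get $V_f \subset V(\mathbb{K}/\mathcal{O}_\mathbb{K})$, verifies $U_l$-stability of $V_f$ by a direct matrix computation that uses $(\mathrm{LRC})_\epsilon$, reads off $g$ as the structure homomorphism of $V_f$, and finally shows $V_f \subseteq \Omega$ by evaluating the explicit $2\times 2$ matrix of $\beta\circ\alpha$ on $V_f$. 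The level-raising condition is consumed twice, both times as a concrete cohomological identity; it never reappears as a generator of a kernel ideal.

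Your account of the Gorenstein hypothesis is also off. You want it to force $M_N$ to be free of rank \emph{one} over $\mathbb{T}_\mathcal{M}$; that is both stronger than what is used and generally false (under multiplicity one the localized parabolic $H^1$ is $\mathbb{T}_\mathcal{M}$-free of rank two, not one). In the paper the role of Gorenstein is subtler: Lemma \ref{propspecial} takes freeness of the integral cohomology over $\mathbb{T}_\mathcal{M}$ of arbitrary rank $s$ as a hypothesis and uses the self-duality $\mathbb{T}_\mathcal{M} \cong \mathrm{Hom}_{\mathcal{O}_\mathbb{K}}(\mathbb{T}_\mathcal{M},\mathcal{O}_\mathbb{K})$ together with adjunction to conclude that $W[I]$ is a \emph{faithful} $\mathbb{T}_N/I$-module, i.e.\ $\mathrm{Ann}_{\mathbb{T}_N}(W_f)=I$. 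Faithfulness --- not rank-one freeness --- is what guarantees that the $\mathbb{T}_{N;l}$-module $V_f$ determines a genuine eigenform $g$ whose kernel is as large as it must be, and it is the ingredient that makes the final implication $T_l - \epsilon(l+1)R_l \in \mathrm{Ann}_{\mathbb{T}_N}(W_f)$ (rather than merely in $\mathrm{Ker}(f)$, with a possibly strict inclusion) close the argument.
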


\begin{remark} \normalfont
The theorem specializes to Ribet's and Diamond's results (for $p\not =2,3$ and assuming the Gorenstein condition for $k=p$ and $k=p+1$ for the localized Hecke algebra) when $k\geq 2$ and $r=1$ (see Thm. 1 in \cite{Ribet1990}, and Thm. 1 in \cite{Diamond1991}) thanks to the Deligne-Serre's lifting lemma (see Lemma 6.11, \cite{DelSer74}) and Carayol's lemma (see Prop. 1.10 \cite{Edi95}). As a consequence of Carayol's lemma, we can still recover Diamond's theorem in the case $p=3$ and non-trivial character if some extra conditions on $\chi$ hold.\\
Moreover, under the restriction $p\not =2$, it specializes to Tsaknias-Wiese's result when $k=2$ and $r \geq 1$ (see Thm. 5 in \cite{TsakniasWiese2017}).
\end{remark}

\begin{remark} \normalfont
The Gorenstein property for the localized Hecke algebra at some maximal ideal of characteristic $p$ (odd prime) arises naturally in the context of residual modular Galois representations to study multiplicity one questions and it was extensively studied by several authors. \\
Being Gorenstein is a property of Noetherian local rings and as such it can be studied from a purely algebraic point of view (see \cite{Til95}). Concerning Hecke algebras, for weight $k=2$, it is always true that the localization at a maximal ideal (whose characteristic $p$ does not divide the level $N$) is Gorenstein (see Thm. 9.2, \cite{Edi92}). \\
The case of general weight $2\leq k\leq p-1$, was settled by a theorem of Faltings and Jordan (see \cite{FJ95}).\\
The cases corresponding to weights $k=p$ or $k=p+1$ are more complicated. One can still find sufficient conditions for the Gorenstein property to hold in the article of Edixhoven (see \cite{Edi92}); but there are known counterexamples due to Kilford and Wiese (see \cite{KW08}) and theoretical results, due to Wiese (see \cite{Wiese07}), that establish the existence of counterexamples in general. Moreover, in \cite{KW08}, a notion of Gorenstein defect is defined and studied.
\end{remark} 


\begin{remark} \normalfont
Here we summarize the key points of the proof of Theorem \ref{MainTheorem}, which will be given in details in the next section.\\
In order to raise the level of a cusp eigenform $f:\mathbb{T}_N\rightarrow \mathcal{O}_\mathbb{K}/ \pi^r$ we first construct a $\mathbb{T}_N$-module $W_f$ such that $\text{Ann}_\mathbb{T}(W_f)=\text{Ker}(f)$ and the action is given by $T \cdot w=f(T)w$ for all $w\in W_f$ and for all $T\in\mathbb{T}_N$. The $\mathbb{T}_N$-module structure of $W_f$ determines uniquely the cuspidal eigenform $f$ (this will be a consequence of lemma \ref{bah}).\\
By a (slightly adapted) result of Diamond (see Lemma 3.2, \cite{Diamond1991}) and assuming the level raising condition $(LRC)_\epsilon$ for some $\epsilon\in \{\pm 1\}$, it is possible to associate to $W_f$ a $\mathbb{T}_{N;l}$-module $V_f$ such that $\mathbb{T}_{N;l}/{\text{Ann}_{\mathbb{T}_{N;l}}(V_f)}$ is isomorphic (as $\mathcal{O}_\mathbb{K}$-algebra) to $\mathcal{O}_\mathbb{K}/(\pi^r)$. The structure homomorphism of $V_f$ as $\mathbb{T}_{N;l}$-module is given by a $\mathcal{O}_\mathbb{K}$-algebra homomorphism $g: \mathbb{T}_{N;l}\rightarrow \text{End}_{\mathcal{O}_\mathbb{K}}(V_f)$ which factors via the natural projection (we keep calling it $g$) $g: \mathbb{T}_{N;l}\rightarrow \mathbb{T}_{N;l}/{\text{Ann}_{\mathbb{T}_{N;l}}(V_f)}$.  By construction, the $\mathcal{O}_\mathbb{K}$-algebra homomorphism $g:\mathbb{T}_{N;l}\rightarrow {\mathcal{O}_\mathbb{K}}/(\pi^r)$ (i.e. a cuspidal eigenform modulo $\pi^r$ of level $lN$) will satisfy $f(T_n)=g(T_n)$ for all positive integers $n$ coprime with $lpN$.\\
Finally, we will prove that $g$ is new at $l$, i.e. it factors through $\mathbb{T}_{N;l}^{l\text{-new}}$. Indeed, assuming the level raising condition $(LRC)_\epsilon$, we prove that $U_{l}-\epsilon R_l$ is contained in $\text{Ann}_{\mathbb{T}_{N;l}} (V_f)$ and, as a consequence, $V_f$ is a $\mathbb{T}_{N,l}$-submodule of a non-trivial congruence module $\Omega$, i.e. a $\mathbb{T}_{N;l}$-module whose annihilator ${\text{Ann}_{\mathbb{T}_{N;l}}}(\Omega)$ contains both the kernels of the natural projections of the Hecke algebra $\mathbb{T}_{N;l}$ onto its $l$-old and $l$-new parts. The existence of such congruence module is granted by the work of Diamond (see \cite{Diamond1991}). Hence, we have that $\text{Ann}_{\mathbb{T}_{N;l}}(\Omega)\subseteq \text{Ann}_{\mathbb{T}_{N;l}} (V_f)$. It follows that $\text{Ker}(\mathbb{T}_{N,l} \twoheadrightarrow \mathbb{T}_{N,l}^{l\text{-new}} )$ is contained in $\text{Ann}_{\mathbb{T}_{N;l}} (V_f)$, and so in particular $g$ factors through the quotient $\mathbb{T}_{N;l}^{l\text{-new}}$, i.e. it is new at $l$.
\end{remark}

\subsection{Proof of Theorem \ref{MainTheorem}}
The necessity of the level raising condition is exactly the content of Proposition \ref{necessary}. 
Hence, we need to prove that if $f$ satisfies $(LRC)_\epsilon$ for some $\epsilon\in\{\pm\}$ then there exists a cusp eigenform $g: \mathbb{T}_{N;l} \rightarrow \mathcal{O}_{\mathbb{K}}/{(\pi^r)}$ such that $f(T_n)=g(T_n)$ for every
$n$ such that $(n,lN)=1$ and such that $g$ factors through $\mathbb{T}^{l\text{-new}}_{N;l}$.\\ 
Denote the kernel of $f: \mathbb{T}_N \rightarrow \mathcal{O}_{\mathbb{K}}/{(\pi^r)}$ as $I$. 
Assume that 
$$T_l-\epsilon(l+1)R_l\in I \;\;\;\;\;\;\;\;\;\;\;\; (\mathrm{LRC})_\epsilon$$ 
for some $\epsilon\in\{\pm1\}$. 
Denote by $\bar{f}$ the reduction of $f$ modulo $\pi$ and denote by $\mathcal{M}$ its kernel. We have that $\mathcal{M}$ is a maximal ideal and $I\subseteq \mathcal{M}$.\\
For $k\geq 2$, and for $\Gamma$ any congruence subgroups of  $\text{Sl}_2(\mathbb{Z})$, let $L_{k-2}(\mathcal{O}_\mathbb{K})$ denote the $\Gamma$-module $\mathrm{Sym}^{k-2} (\mathcal{O}_\mathbb{K} ^2)$. The $\Gamma$-module structure of $L_{k-2}(\mathcal{O}_\mathbb{K})$ is induced by the symmetric power of the module structure of $\mathcal{O}_\mathbb{K} ^2$ given by standard matrix multiplication.\\
The parabolic cohomology group $H^1_P(\Gamma_1(N),L_{k-2}(\mathcal{O}_{\mathbb{K}}))$ is obtained as a subgroup of the standard cohomology group $H^1(\Gamma_1(N),L_{k-2}(\mathcal{O}_{\mathbb{K}}))$ by considering the cocycles $w$ satisfying $w(\gamma)\in (\gamma-1)L_{k-2}(\mathcal{O}_\mathbb{K})$ for all parabolic elements $\gamma\in\Gamma_1(N)$, i.e. all the matrices conjugate to $\pm \begin{pmatrix} 1 & d\\ 0 & 1\end{pmatrix}$ for some integer $d$ (see sec. 1.4 \cite{Shi71}).
Recalling that $l$ does not divide $pN$, we define:

$$W(\mathcal{O}_{\mathbb{K}})=\text{Image}( H^1_P(\Gamma_1(N),L_{k-2}(\mathcal{O}_{\mathbb{K}}))\xrightarrow{j_1} H^1_P(\Gamma_1(N),L_{k-2}(\mathbb{K}))) $$
$$V(\mathcal{O}_{\mathbb{K}})=\text{Image}(H^1_P(\Gamma_1(N)\cap \Gamma_0(l),L_{k-2}(\mathcal{O}_{\mathbb{K}}))) \xrightarrow{j_2} H^1_P(\Gamma_1(N)\cap \Gamma_0(l),L_{k-2}(\mathbb{K})))$$
where the maps $j_1$ and $j_2$ are the ones induced on cohomology by the natural injection of $\mathcal{O}_{\mathbb{K}}$ in $\mathbb{K}$.\\
Consider the $\mathcal{O}_{\mathbb{K}}$-module $\mathbb{K}/\mathcal{O}_\mathbb{K}$, we define:
$ W(\mathbb{K}/\mathcal{O}_\mathbb{K}):=W(\mathcal{O}_{\mathbb{K}})\otimes_{\mathcal{O}_{\mathbb{K}}} \mathbb{K}/\mathcal{O}_\mathbb{K}$.\\ 
By Hida (see Thm. 3.2, \cite{Hi81}, we recall that we are under the assumption $N\geq 5$), assuming that $p\nmid N(k-2)!$ (or equivalently, $p\nmid N$ and $k\leq p+1$) the $\mathcal{O}_\mathbb{K}$-module $W(\mathcal{O}_\mathbb{K})$ (resp. $V(\mathcal{O}_\mathbb{K})$) is finite, free and self-dual with respect to the perfect pairing given by the cup product.\\
The double coset operators of $\text{Sl}_2(\mathbb{Z})$ act on the $\mathcal{O}_\mathbb{K}$-module $W(\mathcal{O}_\mathbb{K})$ and such an action is compatible with the action of the double coset operators on the space of classical cuspidal eigenforms of weight $k$ and level $N$ (see sec. 8.3 in \cite{Shi71}).   In other words, Eichler and Shimura (see sec. 8.3, \cite{Shi71}) proved that the $\mathcal{O}_\mathbb{K}$-module $W(\mathcal{O}_\mathbb{K})$ (resp. $V(\mathcal{O}_\mathbb{K})$) is invariant under the action of the full Hecke algebra acting on the space of cusp forms of weight $k\geq2$ and level $\Gamma_1(N)$ (resp. level $\Gamma_1(N)\cap\Gamma_0(l)$). Now, let $\alpha$ be the map
 $$\alpha: W(\mathbb{K}/\mathcal{O}_{\mathbb{K}})^{\oplus2}\rightarrow V(\mathbb{K}/\mathcal{O}_{\mathbb{K}})$$
 induced by the inclusions of $\Gamma_1(N)\cap \Gamma_0(l)$ in $\Gamma_1(N)$ via the identity and the conjugation by $\begin{pmatrix}
 l & 0 \\ 0 & 1
 \end{pmatrix}$.
  As a generalization of Ihara's lemma (see lemma 3.2, \cite{Ihara1975}), Diamond proved (see lemma 3.2, \cite{Diamond1991}) the following:
  
\begin{lemma}(Diamond) \label{Diamond} The map $\alpha$ is injective.
\end{lemma}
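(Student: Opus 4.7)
The plan is to prove injectivity of $\alpha$ by first reducing modulo prime powers to a residual statement, and then invoking a tree-theoretic Mayer--Vietoris argument in the spirit of Ihara's original proof, as extended by Diamond to higher weights.

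For the first step, by the cited result of Hida both $W(\mathcal{O}_\mathbb{K})$ and $V(\mathcal{O}_\mathbb{K})$ are free of finite rank over $\mathcal{O}_\mathbb{K}$. Writing $\mathbb{K}/\mathcal{O}_\mathbb{K} = \varinjlim_n \mathcal{O}_\mathbb{K}/\pi^n$ and using flatness, the map $\alpha$ is identified with the colimit of its mod-$\pi^n$ reductions $\alpha_n : (W(\mathcal{O}_\mathbb{K})/\pi^n)^{\oplus 2} \to V(\mathcal{O}_\mathbb{K})/\pi^n$. A snake-lemma induction on $n$ (applied to the evident short exact sequences $0 \to W(\mathcal{O}_\mathbb{K})/\pi \to W(\mathcal{O}_\mathbb{K})/\pi^n \to W(\mathcal{O}_\mathbb{K})/\pi^{n-1} \to 0$ and similarly for $V$) then reduces the injectivity of every $\alpha_n$ to the injectivity of the mod-$\pi$ map $\alpha_1$, which via Eichler--Shimura is the restriction map on parabolic cohomology
$$H^1_P(\Gamma_1(N), L_{k-2}(\mathbb{F}))^{\oplus 2} \to H^1_P(\Gamma_1(N)\cap \Gamma_0(l), L_{k-2}(\mathbb{F}))$$
with $\mathbb{F}:=\mathcal{O}_\mathbb{K}/\pi$, induced by the two specified embeddings of $\Gamma := \Gamma_1(N)\cap\Gamma_0(l)$ into $\Gamma' := \Gamma_1(N)$.

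For the second step, introduce the arithmetic group $\Gamma[1/l] := \Gamma_1(N)\cap \mathrm{SL}_2(\mathbb{Z}[1/l])$, which acts on the Bruhat--Tits tree of $\mathrm{PGL}_2(\mathbb{Q}_l)$ with a one-edge fundamental domain: the edge stabilizer is $\Gamma$, and the two adjacent vertex stabilizers are $\Gamma'$ and its conjugate by $\begin{pmatrix}l & 0 \\ 0 & 1\end{pmatrix}$. The associated Mayer--Vietoris long exact sequence, after restriction to the parabolic subcomplex, reads
$$H^0_P(\Gamma, L_{k-2}(\mathbb{F})) \to H^1_P(\Gamma[1/l], L_{k-2}(\mathbb{F})) \to H^1_P(\Gamma', L_{k-2}(\mathbb{F}))^{\oplus 2} \xrightarrow{\alpha_1} H^1_P(\Gamma, L_{k-2}(\mathbb{F})).$$
Under the hypotheses $N \geq 5$ and $p \nmid N(k-2)!$, the module $L_{k-2}(\mathbb{F})$ has no nonzero $\Gamma$-invariants, so the leftmost term vanishes and $\ker(\alpha_1)$ is identified with the image of $H^1_P(\Gamma[1/l], L_{k-2}(\mathbb{F}))$.

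The main obstacle, and the heart of Diamond's argument, is to show that this image vanishes. The plan is to prove that the coinvariants $L_{k-2}(\mathbb{F})_{\Gamma[1/l]}$ vanish by exploiting the unipotent elements $\begin{pmatrix}1 & 1/l^m \\ 0 & 1\end{pmatrix}\in \Gamma[1/l]$ for $m\geq 0$, which act nontrivially modulo $p$ on every nonzero vector of $L_{k-2}(\mathbb{F})$ once $p \nmid (k-2)!$; combined with strong approximation for $\mathrm{SL}_2$, this forces any $\Gamma[1/l]$-cocycle whose images in the two vertex-subgroup parabolic cohomologies vanish to be itself a coboundary. The resulting injectivity of $\alpha_1$ then propagates through the snake-lemma reduction of the first step to yield injectivity of $\alpha$, completing the proof.
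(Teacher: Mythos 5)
The paper does not actually prove this statement; it simply cites Lemma~3.2 of Diamond's 1991 paper, so there is no in-paper argument to compare against. What you have written is therefore an attempt to reconstruct Diamond's proof, and the overall skeleton is correct and does match the known Ihara--Ribet--Diamond approach: reduce to a mod-$\pi$ statement, then exploit Serre's amalgamated-product decomposition of $\mathrm{SL}_2(\mathbb{Z}[1/l])$ acting on the Bruhat--Tits tree of $\mathrm{PGL}_2(\mathbb{Q}_l)$, with edge stabilizer $\Gamma_1(N)\cap\Gamma_0(l)$ and vertex stabilizers $\Gamma_1(N)$ and its $\mathrm{diag}(l,1)$-conjugate, to produce a Mayer--Vietoris sequence that identifies the kernel of $\alpha_1$ with an image from $H^1$ of $\Gamma[1/l]$.

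There are, however, two genuine gaps in the closing part. First, you invoke a ``parabolic Mayer--Vietoris'' as if $H^1_P$ fits into the amalgamated-product long exact sequence in the same way $H^1$ does; this does not come for free. Mayer--Vietoris holds for ordinary group cohomology, and the standard way to handle the parabolic condition is to run Mayer--Vietoris on full $H^1$, identify $\ker(\alpha_1)$ there with the image of $H^1(\Gamma[1/l], L_{k-2}(\mathbb{F}))$, and then prove separately that any class in this image whose restriction to each vertex group is parabolic must vanish. Writing $H^0_P$ and claiming it vanishes (which is false already for $k=2$, where $L_0=\mathbb{F}$ with trivial action gives nonzero invariants) also signals that the parabolic bookkeeping is not set up correctly. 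Second, and more seriously, the final vanishing step is where all the work lies, and your proposed mechanism --- vanishing of the coinvariants $L_{k-2}(\mathbb{F})_{\Gamma[1/l]}$ together with ``strong approximation'' --- is not the right ingredient. Coinvariants govern $H_0$, not the relevant $H^1$, and no concrete route is given from coinvariant vanishing to the cocycle being a coboundary. The actual key input in Diamond's (and Ribet's) argument is the \emph{congruence subgroup property} of $\mathrm{SL}_2(\mathbb{Z}[1/l])$ (Mennicke, Serre): since $L_{k-2}(\mathbb{F})$ is finite, a cocycle on $\Gamma[1/l]$ cuts out a finite-index subgroup, CSP forces this subgroup to contain a principal congruence subgroup $\Gamma(M)[1/l]$ with $(M,l)=1$, and one then uses the explicit structure of the cusps and the parabolicity of the two vertex restrictions to conclude the cocycle is a coboundary. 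Until that step is supplied, the argument is incomplete precisely at the point that makes the lemma nontrivial. The base-change identification of $\alpha_1$ with the map on $\mathbb{F}$-coefficient parabolic cohomology is also asserted rather than argued (it requires torsion-freeness of the relevant $H^2$, which is available under Hida's hypotheses), but that is a minor omission compared to the two points above.
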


Diamond proved this lemma, which is a fundamental step in the proof of level raising for classical cusp eigenforms of weight $k\geq2$, to deduce that if the level raising condition holds then there exists a non-trivial congruence module $\Omega$ in $V(\mathbb{K}/\mathcal{O}_{\mathbb{K}})$, i.e. $\Omega$ is a non-trivial $\mathbb{T}_{N;l}$-submodule of $V(\mathbb{K}/\mathcal{O}_{\mathbb{K}})$ whose action of $\mathbb{T}_{N;l}$ factors through $\mathbb{T}_{N;l}^{l\text{-new}}$ and $\mathbb{T}_{N;l}^{l\text{-old}}$. Moreover, there is an isomorphism $\Omega\cong \text{Ker}(\beta\circ\alpha)$ where $\beta$ is the adjoint map of $\alpha$ given by the standard cup product on the cohomology groups.\\
Since we want to prove that the level raising congruence can be obtained preserving the Dirichlet character $\chi$ associated to the diamond operators, we will first restrict ourself to the $\chi$-invariant submodules of the parabolic cohomology groups defined above. \\ 
Indeed, the $\mathcal{O}_\mathbb{K}$-module $W(\mathcal{O}_\mathbb{K})$ has a natural action of the group associated to diamond operators $\Gamma_0(N)/\Gamma_1(N)\cong (\mathbb{Z}/N\mathbb{Z})^{\times}$ (via double coset operators, see chap. 8 in \cite{Shi71}). For any Dirichlet character $\psi: (\mathbb{Z}/N\mathbb{Z})^\times\rightarrow (\mathcal{O}_\mathbb{K})^{\times}$, we define
$$W(\mathcal{O}_\mathbb{K})^{(\psi)}:=\{v\in W(\mathcal{O}_\mathbb{K}): h \cdot v= \psi(h) v \;\;\; \text{ for all } \;h\in(\mathbb{Z}/N\mathbb{Z})^{\times}\}.$$
Morever, we define $W(\mathbb{K}/\mathcal{O}_\mathbb{K})^{(\psi)}:=W(\mathcal{O}_{\mathbb{K}})^{(\psi)}\otimes_{\mathcal{O}_{\mathbb{K}}} \mathbb{K}/\mathcal{O}_\mathbb{K}$. The  $\mathcal{O}_\mathbb{K}$-submodule $W(\mathbb{K}/\mathcal{O}_\mathbb{K})^{(\psi)}$ (resp. $V(\mathbb{K}/\mathcal{O}_\mathbb{K})^{(\psi)}$) inherits a natural structure of $\mathbb{T}_{N}$-module (resp. $\mathbb{T}_{N;l}$-module).\\
The assumption that the odd prime $p$ does not divide $\varphi(N)$ ensure us that the Hecke submodule $W(\mathcal{O}_\mathbb{K})^{(\psi)}$ of cocycle classes on which the group $(\mathbb{Z}/N\mathbb{Z})^{\times}$ acts via the Dirichlet character $\psi$ is a direct summand of $W(\mathcal{O}_\mathbb{K})$. In order to see this, it is enough to observe that that there exist, in the $\mathcal{O}_\mathbb{K}$-algebra $\mathbb{T}_N$, the idempotent $e_\psi : W(\mathcal{O}_\mathbb{K})\rightarrow W(\mathcal{O}_\mathbb{K})^{(\psi)}$ given by $e_\psi=\frac{1}{\varphi(N)}\sum_{g\in (\mathbb{Z}/N\mathbb{Z})^{\times}} \psi(g^{-1})g$.\\
The theory of cohomological congruence module developed by Diamond (see \cite{Diamond1991}) works with fixed character once assumed that $p$ does not divide $\varphi(N)$.\\
Indeed, under the extra hypothesis $p\nmid \varphi(N)$, the restriction of $\alpha$ to $(W({\mathbb{K}/\mathcal{O}_{\mathbb{K}}})^{(\chi)})^{\oplus2}$ gives us an injective map
$$\alpha^{(\chi)}: (W({\mathbb{K}/\mathcal{O}_{\mathbb{K}}})^{(\chi)})^{\oplus2}\rightarrow V(\mathbb{K}/\mathcal{O}_{\mathbb{K}})^{(\chi)}.$$
As for $\alpha$, we can associate to the map $\alpha^{(\chi)}$ a well defined congruence module $\Omega^{(\chi)}\cong\text{Ker}(\beta^{(\chi)}\circ \alpha^{(\chi)})$, where $\beta^{(\chi)}$ is the adjoint map associated to $\alpha^{(\chi)}$ with respect to the cup product (see also sec. 3, \cite{Diamond1989}).\\
Now, we define  $W_f:=W(\mathbb{K}/\mathcal{O}_{\mathbb{K}})[I]$. We recall that if $R$ is a commutative ring, $I$ an ideal and $M$ is an $R$-module, we define $M[I]=\{m\in M: \; Im=0\}$. \\
The $\mathcal{O}_\mathbb{K}$-module $W_f$ has a natural structure of $\mathbb{T}_N$-module, and it is contained in $W(\mathbb{K}/\mathcal{O}_\mathbb{K})^{(\chi)}$.\\
Hence, the assumption $p\nmid \varphi(N)$ allows us to restrict ourself to work with Hecke submodules of $W(\mathbb{K}/\mathcal{O}_\mathbb{K})$ on which the diamond operators act via the character $\chi$. \\
By these considerations, from now on, we will simplify the notation by dropping the superscript for the fixed character $\chi$; in other words, from now on $\alpha=\alpha^{(\chi)}$ and similar for $W(\mathbb{K}/\mathcal{O}_\mathbb{K})^{(\chi)}$, $V(\mathbb{K}/\mathcal{O}_\mathbb{K})^{(\chi)}$and $\Omega^{(\chi)}$. We can now continue the proof.\\ 
The chain of inclusion $(\pi^r)\subseteq I \subset \mathcal{M}$ induces:

$$ W(\mathbb{K}/\mathcal{O}_{\mathbb{K}})[\pi^r]\supseteq W_f \supseteq W(\mathbb{K}/\mathcal{O}_{\mathbb{K}})[\mathcal{M}].$$
The characteristic of $\mathbb{T}_N/I$ is a power of $p$, in particular we have the following chain of inclusions of ideals in $\mathbb{T}_N$: 
$$(\pi^r) \subseteq I \subseteq \mathcal{M}.$$
The action of $\mathbb{T}_{N}$ on $W_f$ factors through the quotient ${\mathbb{T}_{N}}/{I}$. Because of the Gorenstein assumption, a result in commutative algebra (see Lemma \ref{propspecial}) will ensure us that $W_f$ is a faithful $\mathbb{T}_{N}/I$-module, or in other words 
$\text{Ann}_{\mathbb{T}_{N}}(W_f)=I$. We will prove this in the next section, now we will complete the proof of the main theorem.\\ We can finally apply Ribet's analysis in the cohomological context. We recall that we dropped the notation for the fixed character but we are always working with scalar diamond operators coming from the definition of the Hecke algebras $\mathbb{T}_N$ and $\mathbb{T}_{N;l}$ acting faithfully on the space of cusp forms of character $\chi$ and respective level $N$ and $lN$.\\
Hence, in order to get a $\mathbb{T}_{N;l}$-module which will lead to the level raising of $f$, we will embed $W_f$ in $ W(\mathbb{K}/\mathcal{O}_{\mathbb{K}})^{\oplus2}$ with a slightly modified antidiagonal embedding ``$\mathrm{ad}^{\epsilon}_{k,\chi}$" which will depend on $\mathrm{(LRC)}_\epsilon$ (in the sense that it will depend on $\epsilon$) and then we will apply $\alpha$.\\
Since $R_l$ is invertible, we define the embedding:

$$
\begin{aligned}
\mathrm{ad}^{(\epsilon)}_{k,\chi}: W(\mathbb{K}/\mathcal{O}_{\mathbb{K}}) &\hookrightarrow W(\mathbb{K}/\mathcal{O}_{\mathbb{K}})^{\oplus2}\\
x &\mapsto (x, -\epsilon l^{2-k}R_l x)
\end{aligned}
$$
Note that if $k=2$ and $\chi=1$, the embedding $\text{ad}^{\epsilon}_{2,1}$ (which sends $x$ to $(x,-\epsilon x)$) coincides with the embedding considered by Ribet (see \cite{Ribet1990}) and by Tsaknias and Wiese (see \cite{TsakniasWiese2017}) in the context of analogous modular Jacobian varieties (we recall anyway that Ribet already observed in his article (see \cite{Ribet1990}) that his argument his entirely cohomological). \\
Now, applying $\alpha$ we can define 
$V_f:=\alpha(\mathrm{ad}^{\epsilon}_{k,\chi}(W_f))\subseteq V(\mathbb{K}/\mathcal{O}_{\mathbb{K}})$. First, we want to show that $V_f$ is Hecke stable, i.e. it inherits the structure of $\mathbb{T}_{N;l}$-module from the standard Hecke action of $\mathbb{T}_{N;l}$ on $V(\mathbb{K}/\mathcal{O}_{\mathbb{K}})$. Since $\alpha$ commutes with the action of the Hecke operators $T_n$ with $(n,l)=1$ and with the automorphism $R_l$, we have that $V_f$ is $\mathbb{T}_{N;l}$-stable if it is stable under the action of the operator $U_l$.\\
 The action of the operator $U_l$ on $W(\mathbb{K}/\mathcal{O}_{\mathbb{K}})^{\oplus2}$ coincide with the action of the operator $U_l$ on the $l$-old space of classical cusp forms of level $lN$, i.e. it is given by the $2\times 2$ matrix (see \cite{Diamond1991}):

$$
U_l|_{W(\mathbb{K}/\mathcal{O}_{\mathbb{K}})^{\oplus2}}=\begin{pmatrix}
T_l & l^{k-1} \\ -l^{2-k}S_l & 0\\
\end{pmatrix}.$$
Finally, we can check that $V_f$ is $U_l$-invariant:\\
fix $x\in W_f$ and let $y=\alpha( ad^{(\epsilon)}_{k,\chi}(x))=\alpha((x,-\epsilon l^{2-k}R_l x))$, we have that (since $\alpha$ is injective)
$$U_l(y)=U_l(\alpha(ad^{\epsilon}_{k,\chi}))= \alpha \Bigg(\begin{pmatrix}
T_l & l^{k-1} \\ -l^{2-k}S_l & 0\\
\end{pmatrix}\begin{pmatrix}
x  \\ -\epsilon l^{2-k}R_l x\\
\end{pmatrix}\Bigg)= \alpha \Bigg(\begin{pmatrix}
T_l x -\epsilon lR_l x  \\ -l^{2-k} S_l x\\
\end{pmatrix}\Bigg) $$
Since $\mathrm{(LRC)}_\epsilon$ holds, we have that:
$$ 
\begin{aligned}
\alpha \Bigg(\begin{pmatrix}
T_l x -\epsilon lR_l x  \\ -l^{2-k} S_l x\\
\end{pmatrix}\Bigg)&=  \alpha \Bigg(\begin{pmatrix}
\epsilon (l+1)R_l x -\epsilon lR_l x  \\ -l^{2-k} {R_l}^2 \epsilon^2 x\\
\end{pmatrix}\Bigg) \\&=  \alpha \Bigg(\begin{pmatrix}
\epsilon R_l & 0  \\ 0 & \epsilon R_l\\
\end{pmatrix}\begin{pmatrix}
 x  \\ -\epsilon l^{2-k} R_l x\\
\end{pmatrix}\Bigg)\\&= \epsilon R_l y.  
\end{aligned}
$$
The operator $R_l$ is a scalar, and so $V_f$ is a well-defined $\mathbb{T}_{N;l}$-module. \\
Hence, the action of $\mathbb{T}_{N;l}$ on $V_f$ is represented by a $\mathcal{O}_\mathbb{K}$-algebra homomorphism 
$g: \mathbb{T}_{N;l} \rightarrow \mathcal{O}_\mathbb{K}/(\pi^r).$ The last thing to prove is that $g$ factors through $\mathbb{T}^{l\text{-new}}_{N;l}$. Since the raising level condition is satisfied, there exists a non-trivial congruence module $\Omega\subset V(\mathbb{K}/\mathcal{O}_{\mathbb{K}})$, i.e. a $\mathbb{T}_{N;l}$-module whose $\mathbb{T}_{N;l}$-action factors both through $\mathbb{T}^{l\text{-old}}_{N;l}$ and $\mathbb{T}^{l\text{-new}}_{N;l}$. By construction,  we have that the action of $\mathbb{T}_{N;l}$ on $V_f$ factors through $\mathbb{T}_{N;l}^{l\text{-old}}$, hence we will prove directly that $V_f\subseteq \Omega$, which concludes the proof.  
Indeed, since $\text{Ker}(\mathbb{T}_{N;l} \rightarrow \mathbb{T}_{N;l}^{l\text{-new}})$ is contained in the annihilator of $\Omega$ (as $\mathbb{T}_{N;l}$ -module), if $V_f \subseteq \Omega$ then the correspondent controvariant inclusion of annihilators proves that the action factors via $\mathbb{T}_{N;l} ^{l\text{-new}}$. \\
The above claim follows directly from the injectivity of $\alpha$. In particular, it follows from lemma \ref{Diamond} that $\Omega \cong \mathrm{Ker}(\beta \circ\alpha)$ where $\beta$ is the adjoint of $\alpha$ with respect to the standard pairing given by the cup product on $W(\mathbb{K}/\mathcal{O}_{\mathbb{K}})$ and on $V(\mathbb{K}/\mathcal{O}_{\mathbb{K}})$. Indeed, explicitly we have (see \cite{Diamond1991}) that the map:

$$\beta \circ \alpha : W(\mathbb{K}/\mathcal{O}_{\mathbb{K}})^{\oplus 2} \rightarrow  W(\mathbb{K}/\mathcal{O}_{\mathbb{K}})^{\oplus 2}$$
is given by the matrix 
$$
\begin{pmatrix}
l+1 & l^{k-2}T_l S_l^{-1} \\
T_l & l^{k-2}(l+1)
\end{pmatrix}
$$
acting on $ W(\mathbb{K}/\mathcal{O}_{\mathbb{K}})^{\oplus 2}$. So in order to show that $V_f$ is contained in $ \Omega=\mbox{Ker}(\beta\circ \alpha)$ we will prove explicitly that if $y\in V_f$ then $(\beta\circ \alpha) (y)=0$. Now, since Lemma \ref{Diamond} holds, we have an isomorphism (by definition) between $\mbox{ad}^{\epsilon}_{k,\chi}(W_f)$ and $V_f$, so any $y\in V_f$ can be written as
$(x,-\epsilon l^{2-k} R_l x)$ for a unique $x\in W_f$. Finally,

$$(\beta \circ \alpha) (y)= \begin{pmatrix}
l+1 & l^{k-2}T_l S_l^{-1} \\
T_l & l^{k-2}(l+1)
\end{pmatrix} 
\begin{pmatrix}
x  \\
-\epsilon l^{2-k} R_l x
\end{pmatrix}=
\begin{pmatrix}
((l+1)-\epsilon {R_l}^{-1}T_l)x \\
(T_l-\epsilon(l+1)R_l)x
\end{pmatrix}=
\begin{pmatrix}
0 \\
0
\end{pmatrix}
$$
where the last step holds because $R_l$ is an automorphism and  the raising level condition holds, i.e.  $T_l-\epsilon(l+1)R_l \in\text{Ann}_\mathbb{T}(W_f)=I=\text{Ker}(f)$. This implies that $g$ factors through $\mathbb{T}_{N;l}^{l-new}$ and the proof is complete.
\begin{remark} \normalfont
Diamond (see Thm. 2 in \cite{Diamond1991}) proved that if $f$ is a classical newform of level $N\geq 5$, weight $k\geq 2$ and character $\chi$, which satisfies the level raising condition $\text{(LRC)}^2$ modulo $\pi^r$ at a prime $l$ (as usual, not dividing $pN$) then there exist a family $\{g_i\}_{i=1}^{M}$, for some positive integer $M$, of $l$-newforms of level $lN$ such that $f\equiv g_i \;\text{mod}\; \pi^{d_i}$ and $\sum_{i=1}^M d_i \geq r$. Using Theorem \ref{MainTheorem} it is possible to deduce more information about the coefficients $d_i$ whose existence was predicted by Diamond. Indeed, for every $i$, the congruence $f\equiv g_i \;\text{mod}\; \pi^{d_i}$ can be seen as a level raising congruence of cuspidal eigenforms modulo $\pi^{d_i}$, and as such we deduce that $\text{(LRC)}_\epsilon$ has to be satisfied modulo $\pi^{d_i}$ for some $\epsilon \in \{ \pm 1\}$. Then we have an upper bound for the coefficients $d_i$:
$$\text{max}_{i}\{d_i\}\leq \text{max}\{v_\pi(a_l(f)-(l+1)l^{\frac{k-2}{2}}\zeta);\;v_\pi(a_l(f)+(l+1)l^{\frac{k-2}{2}}\zeta )\}$$
where, as before, $\zeta\in\mathcal{O}_\mathbb{K}$ satisfies $\zeta^2=\chi(l)$.
\end{remark}
\subsection{A result in commutative algebra}
Let $p$ be an odd prime and let $\mathbb{K}$ be a finite extension of $\mathbb{Q}_p$.
Let $H$ be a finitely generated, free $\mathcal{O}_\mathbb{K}$-module, and let $R \subseteq \text{End}_{\mathcal{O}_\mathbb{K}}(H)$ be a commutative $\mathcal{O}_\mathbb{K}$-subalgebra.\\
 Consider the $\mathcal{O}_\mathbb{K}$-module $W:=H\otimes_{\mathcal{O}_\mathbb{K}} \mathbb{K}/\mathcal{O}_\mathbb{K}$. The ring $R$ naturally acts on $W$ via the left action on $H$, hence $W$ has a natural structure of $R$-module. Assume that $H$ is a (finite) free $R$-module of rank $s$. The following result is probably well known but we were not able to find it in the literature so we will prove it. 

\begin{lemma} \label{propspecial}
	
Let $h$ be an element of the set $\text{Hom}_{\mathcal{O}_\mathbb{K} \text{-Alg}}(R, \mathcal{O}_\mathbb{K} /(\pi^r))$ an let $I=\text{Ker}(h)$ and $\mathcal{M}$ be the unique maximal ideal containing $I$.
Assume that the localization $R_\mathcal{M}$ is a Gorenstein ring, i.e. $\text{Hom}_{\mathcal{O}_\mathbb{K}} (R_\mathcal{M}, \mathcal{O}_\mathbb{K})$ is a free $R_\mathcal{M}$-module of rank 1.\\ Then $W[I]=\{w\in W : Iw=0\}$ is a faithful $R/I$-module, or equivalently $\text{Ann}_{R}(W[I])=I$.

\end{lemma}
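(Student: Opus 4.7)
The plan is to reduce to the case where $R$ is a complete local Gorenstein $\mathcal{O}_\mathbb{K}$-algebra and then, exploiting the freeness of $H$ over $R$ together with Gorenstein duality, to identify $W[I]$ explicitly with $(R/I)^s$, from which the faithfulness of $W[I]$ as an $R/I$-module is immediate.

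First I would reduce to the local situation. Since $H$ is finitely generated free over $\mathcal{O}_\mathbb{K}$, the subalgebra $R\subseteq \text{End}_{\mathcal{O}_\mathbb{K}}(H)$ is a finite $\mathcal{O}_\mathbb{K}$-algebra, hence semilocal; standard idempotent-lifting gives a decomposition $R\cong\prod_{\mathcal{N}} R_{\mathcal{N}}$ over the finitely many maximal ideals of $R$, and a matching decomposition of $H$. Because $R/I\cong \mathcal{O}_\mathbb{K}/\pi^r$ is a $p$-power group, the ideal $I$ is concentrated in the $\mathcal{M}$-factor, so after passing to that factor I may assume $R$ itself is complete local Gorenstein, with $H$ still free of rank $s$ over $R$. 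The freeness then gives
$$W\;=\;H\otimes_{\mathcal{O}_\mathbb{K}}\mathbb{K}/\mathcal{O}_\mathbb{K}\;\cong\;(R\otimes_{\mathcal{O}_\mathbb{K}}\mathbb{K}/\mathcal{O}_\mathbb{K})^{s}$$
as $R$-modules, and hence $W[I]\cong\big((R\otimes_{\mathcal{O}_\mathbb{K}}\mathbb{K}/\mathcal{O}_\mathbb{K})[I]\big)^{s}$. It now suffices to show $(R\otimes_{\mathcal{O}_\mathbb{K}}\mathbb{K}/\mathcal{O}_\mathbb{K})[I]\cong R/I$ as $R/I$-modules.

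The key step, and the one I expect to be the main obstacle, is an $R$-linear identification of $R\otimes_{\mathcal{O}_\mathbb{K}}\mathbb{K}/\mathcal{O}_\mathbb{K}$ with $\text{Hom}_{\mathcal{O}_\mathbb{K}}(R,\mathbb{K}/\mathcal{O}_\mathbb{K})$: such an identification is only $\mathcal{O}_\mathbb{K}$-linear in general, and upgrading it to an $R$-linear one is precisely where Gorenstein duality enters. Being torsion-free and finite over the discrete valuation ring $\mathcal{O}_\mathbb{K}$, the ring $R$ is free as an $\mathcal{O}_\mathbb{K}$-module, so the short exact sequence $0\to\mathcal{O}_\mathbb{K}\to\mathbb{K}\to\mathbb{K}/\mathcal{O}_\mathbb{K}\to 0$ yields a natural $R$-linear isomorphism
$$\text{Hom}_{\mathcal{O}_\mathbb{K}}(R,\mathcal{O}_\mathbb{K})\otimes_{\mathcal{O}_\mathbb{K}}\mathbb{K}/\mathcal{O}_\mathbb{K}\;\cong\;\text{Hom}_{\mathcal{O}_\mathbb{K}}(R,\mathbb{K}/\mathcal{O}_\mathbb{K}).$$
The Gorenstein hypothesis on $R=R_\mathcal{M}$ supplies an $R$-linear isomorphism $\text{Hom}_{\mathcal{O}_\mathbb{K}}(R,\mathcal{O}_\mathbb{K})\cong R$; tensoring this with $\mathbb{K}/\mathcal{O}_\mathbb{K}$ delivers the desired $R$-linear identification $R\otimes_{\mathcal{O}_\mathbb{K}}\mathbb{K}/\mathcal{O}_\mathbb{K}\cong \text{Hom}_{\mathcal{O}_\mathbb{K}}(R,\mathbb{K}/\mathcal{O}_\mathbb{K})$.

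To conclude, a map $\phi:R\to\mathbb{K}/\mathcal{O}_\mathbb{K}$ is annihilated by $I$ iff it factors through the surjection $R\twoheadrightarrow R/I\cong\mathcal{O}_\mathbb{K}/\pi^r$, and a direct computation gives
$$\text{Hom}_{\mathcal{O}_\mathbb{K}}(\mathcal{O}_\mathbb{K}/\pi^r,\mathbb{K}/\mathcal{O}_\mathbb{K})\;=\;(\mathbb{K}/\mathcal{O}_\mathbb{K})[\pi^r]\;=\;\pi^{-r}\mathcal{O}_\mathbb{K}/\mathcal{O}_\mathbb{K}\;\cong\;\mathcal{O}_\mathbb{K}/\pi^r,$$
as $\mathcal{O}_\mathbb{K}/\pi^r$-modules and hence as $R/I$-modules. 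Chaining the isomorphisms yields $W[I]\cong (R/I)^{s}$ as $R/I$-modules, which is visibly faithful, so $\text{Ann}_{R}(W[I])=I$ as required.
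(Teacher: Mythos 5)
Your argument is correct and rests on the same two pillars as the paper's proof: the freeness of $H$ over $R$ together with the Gorenstein isomorphism $\text{Hom}_{\mathcal{O}_\mathbb{K}}(R_\mathcal{M},\mathcal{O}_\mathbb{K})\cong R_\mathcal{M}$, leading to the conclusion $W[I]\cong \text{Hom}_{\mathcal{O}_\mathbb{K}}(R/I,\cdot)^{\oplus s}\cong(R/I)^{\oplus s}$. The route you take is slightly more streamlined: you work with the Pontryagin dual $\text{Hom}_{\mathcal{O}_\mathbb{K}}(R,\mathbb{K}/\mathcal{O}_\mathbb{K})$ directly on $W$, taking $I$-torsion in one step via $\text{Hom}(R,\mathbb{K}/\mathcal{O}_\mathbb{K})[I]=\text{Hom}(R/I,\mathbb{K}/\mathcal{O}_\mathbb{K})$, and you get the needed $R$-linearity from the short exact sequence $0\to\mathcal{O}_\mathbb{K}\to\mathbb{K}\to\mathbb{K}/\mathcal{O}_\mathbb{K}\to0$ and freeness of $R$ over $\mathcal{O}_\mathbb{K}$. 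The paper instead first identifies $W[\pi^r]_\mathcal{M}$ with $\text{Hom}_{\mathcal{O}_\mathbb{K}}(R_\mathcal{M}/(\pi^r),\mathcal{O}_\mathbb{K}/(\pi^r))^{\oplus s}$ (invoking the Tate module $\text{Ta}_\pi(W)\cong H$ and a Bourbaki base-change lemma for $\text{Hom}$) and then passes to the $I$-torsion submodule via an auxiliary ideal $\bar I\subset R/(\pi^r)$. Your version avoids the Tate-module detour and the base-change citation, so it is a modest simplification of the same argument; both proofs reduce the statement to the faithfulness of $\text{Hom}_{\mathcal{O}_\mathbb{K}}(\mathcal{O}_\mathbb{K}/(\pi^r),\cdot)\cong\mathcal{O}_\mathbb{K}/(\pi^r)$ over $R/I\cong\mathcal{O}_\mathbb{K}/(\pi^r)$.
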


\begin{remark} \label{bello} \normalfont 
We can apply the above lemma in the context of cuspidal eigenforms modulo prime powers. Namely $H$ is the parabolic cohomology group that we denoted by $W(\mathcal{O}_\mathbb{K})$, $R$ is the Hecke algebra $\mathbb{T}_N$ and $h=f$, which gives us the faithfulness of $W[I]=W_f$ as $R/I$-module. A study of $W$ in weight 2 (in terms of Jacobians attached to modular curves) can be found in an article of Tilouine (see Thm. 3.4 and cor. (1), \cite{Til95}).
\end{remark}

\begin{proof}
Since $R$ is commutative, the inclusions of ideals $(\pi^r)\subseteq I\subset \mathcal{M}$ induce a chain of inclusions of $R$-modules: 
$$W[\mathcal{M}] \subseteq W[I] \subseteq W[\pi^r].$$ 
Let $\mathrm{Ta}_\pi(W)=\varprojlim W[\pi ^n]$ be the $\pi$-adic Tate module associated to $W$. Since $W[\pi^r]$ and $H/\pi^r H$ are isomorphic as $R$-modules and the action of $R$ on both modules is compatible with the transition maps of the projective sytems, we have a canonical isomorphism of $R$-modules $\text{Ta}_\pi(W)\cong H$. We define $\mathcal{Z}$ as the localization of $\mathrm{Ta}_\pi(W)$ at $\mathcal{M}$.\\
We have that $\mathcal{Z}$ is a finitely generated, free $R_\mathcal{M}$-module of rank $s$.
The localized ring $R_\mathcal{M}$ is Gorenstein, or equivalently there is an isomorphism of $R_{\mathcal{M}}$-modules $R_{\mathcal{M}}\cong \mathrm{Hom}_{\mathcal{O}_\mathbb{K}}(R_\mathcal{M},\mathcal{O}_\mathbb{K})$.
As usual, let $W[\pi^r]_\mathcal{M}$ be the $R_\mathcal{M}$-module obtained by localization of $W[\pi^r]$ at $\mathcal{M}$. We have a chain of isomorphisms of $R_{\mathcal{M}}$-modules:

$$W[\pi^r]_\mathcal{M}\cong \faktor{\mathcal{Z}}{(\pi^r) \mathcal{Z}}\cong \Big(\faktor{R_\mathcal{M}}{(\pi^r) R_\mathcal{M}} \Big)^{\oplus s} \cong
\mathrm{Hom}_{\mathcal{O}_\mathbb{K}}\Big(\faktor{R_\mathcal{M}}{(\pi^r)R_\mathcal{M}},\faktor{\mathcal{O}_\mathbb{K}}{(\pi^r)}\Big)^{\oplus s},$$
where the last isomorphism of $R_\mathcal{M}$-modules holds because of the extension of scalars property for homomorphism modules (see Prop. 10, chpt. 1, \cite{Bou89}). 
The inclusion of ideals $(\pi^r) \subseteq I$ and the exactness of the localization functor induce the isomorphism of $R_\mathcal{M}$-modules:

$$W[I]_\mathcal{M}\cong \mathrm{Hom}_{\mathcal{O}_\mathbb{K}}\Big(\faktor{R_\mathcal{M}}{(\pi^r)R_\mathcal{M}},\faktor{\mathcal{O}_\mathbb{K}}{(\pi^r)}\Big)[I]^{\oplus s}. $$
 Moreover, the action of $R_\mathcal{M}$ on $W[I]_\mathcal{M}$ factors through the quotient $R_\mathcal{M}/{I R_\mathcal{M}}$, and so the above isomorphism is an isomorphism of $R_\mathcal{M}/{I R_\mathcal{M}}$-modules.\\
 We observe that, since $\pi^r \in I$, there exists an ideal $\bar{I}$ of $R/{(\pi^r)R}$ such that: $$\faktor{\faktor{R}{(\pi^r)}}{\bar{I} }\cong \faktor{R}{I}.$$
 The $\mathcal{O}_\mathbb{K}$-algebra homomorphism $h$ is surjective and it factors through the quotient by its kernel, so $R/I$ is isomorphic (as a ring) to $\mathcal{O}_{\mathbb{K}}/(\pi^r)$ which is a local ring, hence $R/I$ is a local ring as well. By the exactness of the localization functor we have a ring isomorphism $R_\mathcal{M}/{I R_\mathcal{M}}\cong R/I$.\\
It follows that we have an isomorphism of $R/I$-modules:
 
 $$
 \begin{aligned}
 W[I]\cong W[I]_\mathcal{M}&\cong \mathrm{Hom}_{\mathcal{O}_\mathbb{K}}\Bigg(\faktor{\Big(\faktor{R_\mathcal{M}}{(\pi^r)R_\mathcal{M}}\Big)}{\bar{I}R_\mathcal{M}},\faktor{\mathcal{O}_\mathbb{K}}{(\pi^r)}\Bigg)^{\oplus s}\\&\cong \mathrm{Hom}_{\mathcal{O}_\mathbb{K}}\Big( \faktor{R_{\mathcal{M}}}{I R_{\mathcal{M}}},\faktor{\mathcal{O}_\mathbb{K}}{(\pi^r)}\Big)^{\oplus s}\\&\cong \mathrm{Hom}_{\mathcal{O}_\mathbb{K}}\Big( \faktor{R}{I},\faktor{\mathcal{O}_\mathbb{K}}{(\pi^r)}\Big)^{\oplus s},
 \end{aligned}
 $$
 so $W[I]$ is a faithful $R/I$-module and the proof is complete.

\end{proof}
Now, we want to identify a cuspidal eigenform modulo $\pi^r$ of generic level $N$ (not divisible by $p$) with a unique class of Hecke submodules of $W[\pi^r]$. In order to do so we will make use of lemma \ref{bello}. We will again state the result in the context of commutative algebra. We keep the same setting as the previous lemma. If $J$ is an ideal of $R$ which is contained in a unique maximal ideal, we will denote such maximal ideal as $\mathcal{M}_J$. We define the set:
$$\mathfrak{B}:=\Big\{M\subseteq W[\pi^r] \;\;R \text{-submodule}:\;  \faktor{R}{\text{Ann}_R (M)}\cong \faktor{\mathcal{O}_\mathbb{K}}{(\pi^r)} \text{ as } \mathcal{O}_\mathbb{K}\text{-algebras}; \; R_{\mathcal{M}_{\text{Ann}_R (M)}} \text{ is Gorenstein}\Big\}.$$
Let $\sim$ be the equivalence relation on $\mathfrak{B}$ given by: $M\sim N$ if and only if $\text{Ann}_R (N)=\text{Ann}_R (M)$.
We have the following lemma:

\begin{lemma} \label{bah} Define the map between sets as follows:
$$
\begin{aligned}
\varphi: \mathfrak{A}:=\Big\{h\in\text{Hom}_{\mathcal{O}_\mathbb{K}\text{-Alg}}\Big(R, \faktor{\mathcal{O}_\mathbb{K}}{(\pi^r)}\Big): R_{\mathcal{M}_{\text{Ker}(h)}}& \text{ is Gorenstein}\Big\}\longrightarrow \faktor{\mathfrak{B}}{\sim}\\
h&\longmapsto (W[\text{Ker}(h)])_{\sim}
\end{aligned}
$$
where the symbol $(\cdot)_{\sim}$ denotes the class under the equivalence $\sim$. \\
Then $\varphi$ is a bijection. 
\end{lemma}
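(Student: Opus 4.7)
The plan is to verify that $\varphi$ is well-defined, then establish injectivity and surjectivity, each time leveraging Lemma \ref{propspecial} to identify the annihilator of $W[\mathrm{Ker}(h)]$ with $\mathrm{Ker}(h)$ itself. The well-definedness step requires checking that for $h\in\mathfrak{A}$, the module $M:=W[\mathrm{Ker}(h)]$ actually lies in $\mathfrak{B}$. Since $h$ is an $\mathcal{O}_\mathbb{K}$-algebra map whose image is an $\mathcal{O}_\mathbb{K}$-subalgebra of $\mathcal{O}_\mathbb{K}/(\pi^r)$ containing the image of $1$, it is necessarily surjective, so $R/\mathrm{Ker}(h)\cong \mathcal{O}_\mathbb{K}/(\pi^r)$ as $\mathcal{O}_\mathbb{K}$-algebras. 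Because this quotient is a local ring, $\mathrm{Ker}(h)$ is contained in a unique maximal ideal $\mathcal{M}_{\mathrm{Ker}(h)}$, at which $R$ is Gorenstein by the hypothesis $h\in\mathfrak{A}$; hence Lemma \ref{propspecial} applies and gives $\mathrm{Ann}_R(M)=\mathrm{Ker}(h)$. The Gorenstein condition for $\mathfrak{B}$-membership then transfers verbatim.

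For injectivity, I would argue that if $\varphi(h_1)=\varphi(h_2)$, then by definition $\mathrm{Ann}_R(W[\mathrm{Ker}(h_1)])=\mathrm{Ann}_R(W[\mathrm{Ker}(h_2)])$, and by the identification above these annihilators are exactly $\mathrm{Ker}(h_1)$ and $\mathrm{Ker}(h_2)$ respectively. So $\mathrm{Ker}(h_1)=\mathrm{Ker}(h_2)=:J$, and both $h_1,h_2$ factor through the same quotient $R/J$. The induced $\mathcal{O}_\mathbb{K}$-algebra maps $R/J\to \mathcal{O}_\mathbb{K}/(\pi^r)$ must both be the unique such isomorphism, since any $\mathcal{O}_\mathbb{K}$-algebra endomorphism of $\mathcal{O}_\mathbb{K}/(\pi^r)$ fixes $1$ and is $\mathcal{O}_\mathbb{K}$-linear, hence is the identity. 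Thus $h_1=h_2$.

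For surjectivity, given $[M]\in\mathfrak{B}/\!\sim$, set $J:=\mathrm{Ann}_R(M)$. The defining property of $\mathfrak{B}$ gives an $\mathcal{O}_\mathbb{K}$-algebra isomorphism $\psi:R/J\xrightarrow{\sim}\mathcal{O}_\mathbb{K}/(\pi^r)$; defining $h$ as the composition of the quotient $R\twoheadrightarrow R/J$ with $\psi$ yields $h\in\mathrm{Hom}_{\mathcal{O}_\mathbb{K}\text{-Alg}}(R,\mathcal{O}_\mathbb{K}/(\pi^r))$ with $\mathrm{Ker}(h)=J$. Since $R/J$ is local and $\mathcal{M}_{\mathrm{Ker}(h)}=\mathcal{M}_J$, the Gorenstein hypothesis built into the definition of $\mathfrak{B}$ gives $h\in\mathfrak{A}$. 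Applying Lemma \ref{propspecial} once more gives $\mathrm{Ann}_R(W[\mathrm{Ker}(h)])=\mathrm{Ker}(h)=J=\mathrm{Ann}_R(M)$, so $\varphi(h)=[W[J]]=[M]$.

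The only genuine content is hidden in Lemma \ref{propspecial}; everything else is a bookkeeping exercise in passing between a local $\mathcal{O}_\mathbb{K}$-algebra quotient and the homomorphism that defines it. The one potentially subtle point is the rigidity argument showing $h$ is uniquely determined by its kernel, but this reduces immediately to the fact that $\mathcal{O}_\mathbb{K}/(\pi^r)$ admits no nontrivial $\mathcal{O}_\mathbb{K}$-algebra automorphisms. No further structural input beyond Lemma \ref{propspecial} and the definitions of $\mathfrak{A},\mathfrak{B},\sim$ should be required.
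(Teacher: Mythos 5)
Your proof is correct and follows essentially the same route as the paper's: well-definedness and both injectivity and surjectivity are reduced to the identity $\mathrm{Ann}_R(W[\mathrm{Ker}(h)])=\mathrm{Ker}(h)$ furnished by Lemma \ref{propspecial}. The only difference is that you spell out the rigidity step (that a surjective $\mathcal{O}_\mathbb{K}$-algebra map onto $\mathcal{O}_\mathbb{K}/(\pi^r)$ is determined by its kernel, since $\mathcal{O}_\mathbb{K}/(\pi^r)$ admits no nontrivial $\mathcal{O}_\mathbb{K}$-algebra automorphisms), which the paper leaves implicit when it concludes injectivity directly from $\mathrm{Ker}(h_1)=\mathrm{Ker}(h_2)$.
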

\begin{proof}
First we prove the injectivity of $\varphi$. Take $h_1$ and $h_2$ in $\mathfrak{A}$ and assume that $\varphi(h_1)=\varphi(h_2)$. Then by definition we have that $(W[\text{Ker}(h_1)])_\sim=(W[\text{Ker}(h_2)])_\sim$ and so $\text{Ann}_R (W[\text{Ker}(h_1)])=\text{Ann}_R (W[\text{Ker}(h_2)])$. By lemma \ref{propspecial}, we deduce that $\text{Ker}(h_1)=\text{Ker}(h_2)$ and hence the injectivity follows. \\
For the surjectivity, let $(M)_\sim \in \mathfrak{B}$ and denote by $J_M$ the annihilator $\text{Ann}_R (M)$. Then the natural projection $h_M : R \rightarrow R/J_M$ satisfies $h_M\in\mathfrak{A}$ and $\varphi(h_M)=(W[J_M])_\sim$. Again by lemma \ref{propspecial}, $\text{Ann}_R (W[J_M])=J_M$ and so $(W[J_M])_\sim=(M)_\sim$. This completes the proof.
\end{proof}
\begin{remark} \normalfont
Finally, as in remark \ref{bello}, we can apply the above lemma to $H=W(\mathcal{O}_\mathbb{K})$, and $R=\mathbb{T}_N$ and have a correspondence between cuspidal eigenforms modulo prime powers and certain classes of Hecke modules. 
\end{remark}

\section{Examples}

\pagestyle{plain}

All the following computations are made using MAGMA (see \cite{MAGMA}) and the LMFDB database (see \cite{lmfdb}).\\ We will keep the same notation as before, i.e. $k\geq 2$ and $N\geq 5$ are integers, $p$ is an odd prime and $\mathbb{K}$ is a sufficiently big finite extension of $\mathbb{Q}_p$.\\
In this section we will present examples related to level raising, underlining the difference between full level raising and partial level raising and connecting Theorem \ref{MainTheorem} 
and Diamond's Theorem \ref{Diamond}, which we now state completely:

\begin{theorem}
\label{Diamond}
Let $f$ be a newform of level $N$, weight $k$, character $\chi$ and coefficients in $\mathbb{K}$ satisfying the level raising condition $(\text{LRC})^2$ 
at a prime $l$ modulo $\pi^r$, i.e. $a_l(f)^2-(l+1)^2 l^{k-2}\chi (l)\equiv 0 \text{ mod }\pi^r$. Assume that $p\nmid lN \varphi (N)(k-2)!$. \\
Then there exists a finite family of positive integers $d_i$ and distinct $l$-newforms $g_i$ of level $lN$ (i.e. with respect to the congruence subgroup $\Gamma_1(N) \cap \Gamma_0 (l))$ such that:

$$
\begin{aligned}
(i)&\;\; f \equiv g_i \text{ mod }\pi^{d_i}   \\
(ii)&\;\; \sum_i d_i \geq r
\end{aligned}
$$
\end{theorem}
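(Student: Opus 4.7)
The plan is to reduce Diamond's theorem to a length/Fitting-ideal computation on the cohomological congruence module $\Omega$ that was already deployed in the proof of Theorem \ref{MainTheorem}, and then distribute that length among the individual $l$-newforms $g_i$. First I would form, exactly as in the main proof, the map $\alpha: (W(\mathbb{K}/\mathcal{O}_\mathbb{K}))^{\oplus 2} \to V(\mathbb{K}/\mathcal{O}_\mathbb{K})$, its cup-product adjoint $\beta$, and set $\Omega := \mathrm{Ker}(\beta \circ \alpha)$. By Diamond's analogue of Ihara's lemma (Lemma \ref{Diamond}), $\Omega$ is a $\mathbb{T}_{N;l}$-module whose action factors through both $\mathbb{T}_{N;l}^{l\text{-old}}$ and $\mathbb{T}_{N;l}^{l\text{-new}}$, so each eigencomponent of $\Omega$ realises a congruence between the $l$-old class of $f$ and a classical $l$-newform of level $lN$. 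Localising at the maximal ideal $\mathcal{M} \subset \mathbb{T}_N$ corresponding to $\bar\rho_f$ gives a finite-length $\mathcal{O}_\mathbb{K}$-module $\Omega_\mathcal{M}$.

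The next step is the length bound for $\Omega_\mathcal{M}$. Using the explicit description
\[
\beta \circ \alpha \;=\; \begin{pmatrix} l+1 & l^{k-2} T_l S_l^{-1} \\ T_l & l^{k-2}(l+1) \end{pmatrix}
\]
recorded in the proof of Theorem \ref{MainTheorem}, the determinant is $l^{k-2}\bigl((l+1)^2 - T_l^2 S_l^{-1}\bigr)$, which up to a unit of $\mathcal{O}_\mathbb{K}$ is exactly the level-raising expression $T_l^2 - (l+1)^2 S_l$ of $\mathrm{(LRC)}^2$. Evaluating at the $f$-component and using the hypothesis $a_l(f)^2 \equiv (l+1)^2 l^{k-2}\chi(l) \bmod \pi^r$, a Fitting-ideal calculation shows $\mathrm{length}_{\mathcal{O}_\mathbb{K}} \Omega_\mathcal{M} \geq r$. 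The hypothesis $p \nmid N\varphi(N)(k-2)!$ is used here exactly as in the main proof to ensure that $W(\mathcal{O}_\mathbb{K})$, $V(\mathcal{O}_\mathbb{K})$ are free and self-dual, and that the $\chi$-isotypic projection behaves well integrally.

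Now the classical $l$-newforms $g_i$ with $\bar\rho_{g_i} \cong \bar\rho_f$ correspond bijectively to the maximal ideals $\mathcal{N}_i$ of $\mathbb{T}_{N;l}^{l\text{-new}}$ lying above $\mathcal{M}$. Since $\Omega_\mathcal{M}$ has an action of $\mathbb{T}_{N;l}^{l\text{-new}}$, it decomposes as $\bigoplus_i (\Omega_\mathcal{M})_{\mathcal{N}_i}$, and on the $i$-th factor the eigenvalues of the Hecke operators $T_q$ for $q \nmid lpN$ act via the reduction of $g_i$. Translating the $\mathcal{O}_\mathbb{K}$-length $d_i$ of each component $(\Omega_\mathcal{M})_{\mathcal{N}_i}$ into a $\pi$-adic congruence of $q$-expansion coefficients between $f$ and $g_i$ yields $f \equiv g_i \bmod \pi^{d_i}$, and additivity of length gives $\sum_i d_i \geq \mathrm{length}_{\mathcal{O}_\mathbb{K}} \Omega_\mathcal{M} \geq r$.

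The main obstacle is precisely the translation between \emph{length} of the $g_i$-component of the congruence module and \emph{depth} of the congruence between the $q$-expansions. For this one needs a Gorenstein/duality statement on each completed local factor $(\mathbb{T}_{N;l}^{l\text{-new}})_{\mathcal{N}_i}$ in the spirit of Lemma \ref{propspecial}, which identifies $(\Omega_\mathcal{M})_{\mathcal{N}_i}$ with $\mathrm{Hom}_{\mathcal{O}_\mathbb{K}}(\mathcal{O}_\mathbb{K}/(\pi^{d_i}), \mathbb{K}/\mathcal{O}_\mathbb{K})$ as Hecke modules and thereby forces each $T_q$-eigenvalue agreement $f(T_q) \equiv g_i(T_q)$ to hold to precision $\pi^{d_i}$. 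A secondary subtlety is ensuring the decomposition $\Omega_\mathcal{M} = \bigoplus_i (\Omega_\mathcal{M})_{\mathcal{N}_i}$ holds integrally (not merely after $\otimes \mathbb{K}$); under the numerical hypotheses on $p$ this follows from the same idempotent arguments used after Lemma \ref{Diamond} in the main proof. Once these two points are in place, the existence of the $g_i$ and the inequality $\sum d_i \geq r$ drop out of the length bookkeeping.
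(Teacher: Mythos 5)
The paper does not actually prove this statement: it is recalled verbatim as Theorem~2 of \cite{Diamond1991}, so your proposal has to be measured against Diamond's original argument. Its overall shape --- the congruence module $\Omega=\mathrm{Ker}(\beta\circ\alpha)$, a length bound coming from the determinant $\det(\beta\circ\alpha)=l^{k-2}S_l^{-1}\bigl((l+1)^2S_l-T_l^2\bigr)$ together with the hypothesis $(\mathrm{LRC})^2$ modulo $\pi^r$, and a decomposition of $\Omega_{\mathcal{M}}$ over the $l$-new Hecke algebra --- is indeed the right skeleton, and your first two steps (injectivity of $\alpha$, the determinant/Fitting computation, freeness and self-duality of $W(\mathcal{O}_\mathbb{K})$, $V(\mathcal{O}_\mathbb{K})$ under $p\nmid N\varphi(N)(k-2)!$) reproduce it correctly.

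The genuine gap is the last step, where $\mathcal{O}_\mathbb{K}$-lengths are converted into congruences with classical newforms. First, the maximal ideals $\mathcal{N}_i$ of $\mathbb{T}_{N;l}^{l\text{-new}}$ above $\mathcal{M}$ correspond to residual eigensystems, not to individual newforms: one local factor $(\mathbb{T}^{l\text{-new}})_{\mathcal{N}_i}$ typically contains several classical $l$-newforms, all congruent modulo $\pi$, so ``$T_q$ acts on $(\Omega_\mathcal{M})_{\mathcal{N}_i}$ via the reduction of $g_i$'' is not meaningful as stated. Second, and more seriously, the identification of $(\Omega_\mathcal{M})_{\mathcal{N}_i}$ with $\mathrm{Hom}_{\mathcal{O}_\mathbb{K}}(\mathcal{O}_\mathbb{K}/(\pi^{d_i}),\mathbb{K}/\mathcal{O}_\mathbb{K})$, i.e.\ the claim that each component is $\mathcal{O}_\mathbb{K}$-cyclic of length equal to a congruence depth, is unjustified and false in general: a component of length $2$ may be $(\mathcal{O}_\mathbb{K}/(\pi))^{\oplus 2}$, which only witnesses congruences modulo $\pi$. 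The Gorenstein hypothesis you invoke to force this is not among the hypotheses of the theorem, and the paper itself recalls (Kilford--Wiese) that it genuinely fails for $k=p,p+1$, so it cannot be smuggled in. Moreover, even granting the bookkeeping, what your construction yields at this point is an $\mathcal{O}_\mathbb{K}$-algebra homomorphism $\mathbb{T}^{l\text{-new}}\to\mathcal{O}_\mathbb{K}/(\pi^{d_i})$, i.e.\ a mod $\pi^{d_i}$ eigenform new at $l$; the examples in Section~3.2 of the paper show exactly that such an eigenform need not lift to a classical $l$-newform congruent to $f$ to depth $d_i$, so the asserted conclusion $f\equiv g_i \bmod \pi^{d_i}$ with classical $g_i$ does not follow from your step. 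The missing ingredient, which is how \cite{Diamond1991} concludes, is a commutative-algebra lemma about the finite, $\mathcal{O}_\mathbb{K}$-flat, reduced algebra $\mathbb{T}^{l\text{-new}}_{\mathcal{N}}\hookrightarrow\prod_j\mathcal{O}_{\mathbb{E}_j}$: an $\mathcal{O}_\mathbb{K}$-algebra map to $\mathcal{O}_\mathbb{K}/(\pi^s)$ (here supplied by the cyclic $f$-isotypic submodule of $\Omega$, whose length is at least $r$ by the determinant computation) agrees with the characteristic-zero points, i.e.\ with the individual classical newforms $g_j$, to depths $d_j$ satisfying $\sum_j d_j\geq s$. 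Without this lemma, or an equivalent Fitting-ideal argument over $\mathbb{T}^{l\text{-new}}_{\mathcal{N}}$ rather than over $\mathcal{O}_\mathbb{K}$, the passage from $\mathrm{length}_{\mathcal{O}_\mathbb{K}}\Omega_\mathcal{M}\geq r$ to the family $\{(g_i,d_i)\}$ is not established.
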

Let $f$ be a cuspidal eigenform satisfying the hypothesis of the above theorem. We will say that $f$ satisfies the full level raising modulo $\pi^r$ at a prime $l$ if, applying Theorem \ref{Diamond}, we have that there exists an index $i$ and a $l$-newform $g_i$ such that $f\equiv g_i \text{ mod }\pi^r$, i.e. $d_i=r$. 
\subsection{Full level raising modulo prime powers}

Consider the complex vector space $S_4(\Gamma_0(22))^{\text{new}}$, it has dimension 3. Let $f$ be the newform whose $q$-expansion is $f(q)=q-2q^2-7q^3+4q^4-19q^5+14q^6 \dots$, it has $\mathbb{Q}$ as coefficients field. \\
For the primes $l=5$ and $p=7$, the form $f$ satisfies the following level raising conditions:

$$
\begin{aligned}
&(\text{LRC})^2: \;\; a_5(f)^2-(5+1)^2 5^2 \equiv 0 \text{ mod } 7^2,\\
&(\text{LRC})_{+}: \;\; a_5(f)-(5+1) 5 \equiv 0 \text{ mod } 7^2,\\
&(\text{LRC})_{-}: \;\; a_5(f)-(5+1) 5 \not\equiv 0 \text{ mod } 7,\\
\end{aligned}
$$
indeed note that, by Lemma \ref{confronto}, these conditions are equivalent since $p$ does not divide $l+1$.\\
In accordance with Diamond's result, we find a newform  $g\in S_4 (\Gamma_0 (110))^{\text{new}}$ such that $f$ is congruent to $g$ modulo $7^2$, i.e. $a_q (f)\equiv a_q (g) \text{ mod }7^2$ for all primes $q\not=2, 5, 7,11$.\\ 
Now, denote by $\mathbb{T}_{110}$ the Hecke algebra of level $\Gamma_0(110)$. As predicted by our result (see Thm.\ref{MainTheorem}) there is a cuspidal eigenform modulo $7^2$ of level $110$, say $h:\mathbb{T}_{110} \rightarrow \mathbb{Z}_7/7^2\mathbb{Z}_7$, such that $h$ as $\mathbb{Z}_7$-algebra homomorphism factors through the $\mathbb{Z}_7$-algebra $\mathbb{T}_{110}^\text{7-new}$ (because in particular, it factors through $\mathbb{T}_{110}^{\text{new}}$). As ring homomorphism, the eigenform $h$ lifts in characteristic 0, in particular it is nothing else than the reduction of the classical newform $g$ modulo $7^2$. \\
Here is a list of other examples of full level raising, everything is in accordance with Theorem \ref{MainTheorem} and Theorem \ref{Diamond}:\\
\newline
(1) Let $f(q)=q-2q^2-5q^3-4q^4+ \dots$ be the unique newform in $S_4(\Gamma_0(23))^{\text{new}}$ with rational coefficients. Then $f$ satisfies the level raising condition $(\text{LRC})^2$ and $(\text{LRC})_{-}$ modulo $5^3$ at $l=13$.\\
\newline
(2) Let $f(q)=q-5q^2-7q^3+17q^4-7q^5+\dots$ be the unique newform in $S_4(\Gamma_0(13))^{\text{new}}$ with rational coefficients. Let $l=23$ and $p=3$, then $f$ satisfies at $l$ the conditions $(\text{LRC})^2$ modulo $3^5$, $(\text{LRC})_{-}$ modulo $3$ and $(\text{LRC})_{+}$ modulo $3^4$. Note that the conditions are not equivalent since $v_p(l+1)=1$. It is an example of full level raising not with respect of the condition $(\text{LRC}^2)$, but with respect to $(\text{LRC}_{+})$. There is indeed a congruence modulo $3^4$ between $f$ and a $23$-newform of level $299$. In particular, this example shows why the conditions $(\text{LRC})_\epsilon$ for $\epsilon\in\{\pm1\}$ are the level raising conditions that need to be considered in the prime powers case. 
\subsection{Partial level raising modulo prime powers}
As before, consider the 3-dimensional complex vector space $S_4(\Gamma_0 (22))^{\text{new}}$. Let $f\in S_4(\Gamma_0 (22))^{\text{new}}$ be the newform whose $q$-expansion is $f(q)=q+2q^2+q^3+4q^4-3q^5+2q^6 \dots$, it has $\mathbb{Q}$ as coefficient field. \\
It turns out that for the primes $l=5$ and $p=3$ the form $f$ satisfies the following non-equivalent level raising conditions:
$$
\begin{aligned}
&(\text{LRC})^2: \;\; a_5(f)^2-(5+1)^2 5^2 \equiv 0 \text{ mod } 3^4,\\
&(\text{LRC})_{+}: \;\; a_5(f)-(5+1) 5 \equiv 0 \text{ mod } 3,\\
&(\text{LRC})_{-}: \;\; a_5(f)+(5+1) 5 \equiv 0 \text{ mod } 3^3.\\
\end{aligned}
$$
In accordance with Diamond's theorem (see Thm.2 in \cite{Diamond1991}), we find that there exists a family of $5$-newforms $\{g_i: \; i=1,2,3,4,5\}$ such that $g_i \equiv f \text{ mod }3$ if $i\not=5$ and $g_5\equiv f \text{ mod }3^2$. More precisely, $g_1, \; g_2, \; g_3, \; g_4$ are newforms of level $110$ and $g_5$ is a newform of level $10$:

$$
\begin{aligned}
&g_1(q)=q+2q^2+q^3+4q^4+5q^5+2q^6+23q^7+8q^8\dots,\;\;      &g_1\equiv f \text{ mod } 3   \\ 
&g_2(q)=q+2q^2+7q^3+4q^4-5q^5+14q^6+1q^7+8q^8\dots,\;\;      &g_2\equiv f \text{ mod }  3    \\ 
&g_3(q)=q+2q^2-8q^3+4q^4-5q^5-16q^6+26q^7+8q^8\dots,\;\;  \;  \;  &g_3\equiv f \text{ mod }  3^2    \\ 
&g_4(q)=q-2q^2+4q^3+4q^4+5q^5-8q^6+20q^7-8q^8\dots,\;\;       &g_4\equiv f \text{ mod }   3    \\ 
&g_5(q)=q+2q^2-8q^3+4q^4+5q^5-16q^6-4q^7+8q^8\dots,\;\;       &g_5\equiv f \text{ mod }   3\\
\end{aligned}
$$
This gives an example of a strict inequality in the relation $\sum_i d_i \geq r$ where in this case $r=4$ and $d_i=1$ for $i\not=5$ and $d_5=2$. Note that there are no congruences modulo $3^4$ as predicted.\\
Concerning the more general case of cuspidal eigenforms modulo prime powers, everything behaves according to our theorem \ref{MainTheorem}. In particular, the level raising condition $(\text{LRC})_{-}$, which holds modulo $3$, gives rise to a cuspidal eigenform modulo $3$ which is $5$-new, we call it $h_1: \mathbb{T}_{110}\rightarrow \mathbb{Z}_3/3\mathbb{Z}_3$ such that $f\equiv h_1 \text{ mod }3$ and the level raising condition $(\text{LRC})_{+}$, which holds modulo $3^3$, gives rise to a cuspidal eigenform modulo $3^3$ which is $5$-new, say $h_2: \mathbb{T}_{110}\rightarrow \mathbb{Z}_3/3^3\mathbb{Z}_3$ such that $f\equiv h_2 \text{ mod }3^3$.\\
 Note that, as predicted by the Deligne-Serre's lifting lemma, the $h_1$ lifts in characteristic 0 as $\mathbb{Z}_3$-algebra homomorphism while $h_2$ does not since there are no congruence modulo $3^3$ between $f$ and classical $5$-newforms of level $110$. Hence, this gives a counterexample for the full level raising even when the level raising condition $(\text{LRC})^2$ is replaced by the condition $(\text{LRC})_\epsilon$ for some $\epsilon\in\{\pm 1\}$. Nevertheless, it is interesting to observe that in this case the cuspidal eigenform modulo $3^3$ that we called $h_2$ and which does not lift in characteristic 0 (as a ring homomorphism) can be recovered by a linear combination of the classical $5$-newforms of Diamond's theorem. More specifically, we have that $h_2\equiv f+3g_2+5g_3+18g_4 \text{ mod }3^3$. The cusp form $f+3g_2+5g_3+18g_4$ is not an eigenform in characteristic zero but it becomes an eigenform when reduced modulo $3^3$. \\
Here is another example of partial level raising, everything is in accordance with Theorem \ref{MainTheorem} and Theorem \ref{Diamond}:\\
\newline
Let $f(q)=q+2q^2+4q^4-6q^5-16q^7+ \dots$ be the unique newform in $S_4(\Gamma_0(18))^{\text{new}}$. Then $f$ satisfies the following non-equivalent level raising conditions for $p=5$ at $l=29$:
$$
\begin{aligned}
&(\text{LRC})^2: \;\; a_{29}(f)^2-(29+1)^2 29^2 \equiv 0 \text{ mod } 5^3,\\
&(\text{LRC})_{+}: \;\; a_{29}(f)-(29+1) 29 \equiv 0 \text{ mod } 5^2,\\
&(\text{LRC})_{-}: \;\; a_{29}(f)+(29+1) 29 \equiv 0 \text{ mod } 5.\\
\end{aligned}
$$
It is an example of partial level raising because there are no congruences  modulo $5^2$ between $f$ and $29$-newforms of level $522$.

\printbibliography[heading=bibintoc,title={References}]

\end{document}